\newtheorem{theorem}{Theorem}[section]
\newtheorem{lemma}[theorem]{Lemma}
\newtheorem{proposition}[theorem]{Proposition}
\theoremstyle{definition}
\newtheorem{remark}[theorem]{Remark}
\numberwithin{equation}{section}
\newcommand{\g}{\mathfrak{g}}
\newcommand{\n}{\mathfrak{n}}
\newcommand{\h}{\mathfrak{h}}
\newcommand{\p}{\mathfrak{p}}
\newcommand{\m}{\mathfrak{m}}
\newcommand{\rea}[2]{U_{#1}(#2)} 
\newcommand{\raa}[3]{U_{#1,#2}(#3)} 
\newcommand{\rsa}[2]{S_{#1}(#2)} 
\newcommand{\Fp}{\mathscr{F}(\mathfrak{g},\mathfrak{p})}
\newcommand{\X}[2]{\mathscr{X}_{#1,#2}} 
\newcommand{\Y}[2]{\mathscr{Y}_{#1,#2}} 
\newcommand{\Hom}{\text{Hom}}
\newcommand{\pth}[1]{{#1}^{[p]}} 
\newcommand{\ra}{\rightarrow}
\newcommand{\sudim}{\underline{\text{dim}}\,}
\newcommand{\ev}[1]{{#1}_{\bar{0}}}
\newcommand{\od}[1]{{#1}_{\bar{1}}}
\newcommand{\osp}{\mathfrak{osp}}
\newcommand{\Der}{\text{Der}}
\newcommand{\codim}{\text{codim}\,}
\newcommand{\scodim}{\underline{\text{codim}}\,}
\newcommand{\la}{\lambda}
\newcommand{\La}{\Lambda}
\newcommand{\End}{\text{End}}
\newcommand{\gl}{{\mathfrak{gl}}}
\newcommand{\Ad}{\text{Ad}}
\newcommand{\Z}{ \mathbb Z }
\def\Ddots{\mathinner{\mkern1mu\raise\p@
\vbox{\kern7\p@\hbox{.}}\mkern2mu
\raise4\p@\hbox{.}\mkern2mu\raise7\p@\hbox{.}\mkern1mu}}
\begin{document}
\title[Lie Superalgebras in Prime Characteristic]
{Representations of Lie Superalgebras in Prime Characteristic III}

\author[Lei Zhao]{Lei Zhao}
\address{Department of Mathematics, University of Virginia,
Charlottesville, VA 22904} \email{lz4u@virginia.edu}

\begin{abstract}
For a restricted Lie superalgebra $\g$ over an algebraically closed
field of characteristic $p > 2$, we generalize the deformation
method of Premet and Skryabin to obtain results on the $p$-power and
$2$-power divisibility of dimensions of $\g$-modules. In particular,
we give a new proof of the Super Kac-Weisfeiler conjecture for basic
classical Lie superalgebras. The new proof allows us to improve
optimally the assumption on $p$. We also establish a semisimplicity
criterion for the reduced enveloping superalgebras associated with
semisimple $p$-characters for all basic classical Lie superalgebras
using the technique of odd reflections.
\end{abstract}

\maketitle
\date{}
  \setcounter{tocdepth}{1}

\section{Introduction}

\subsection{}
In \cite{WZ1}, Wang and the author initiated the study of modular
representation theory of Lie superalgebras over an algebraically
closed field $K$ of characteristic $p>2$. Among other things, a
superalgebra generalization (called Super KW conjecture) of
celebrated Kac-Weisfeiler conjecture (Premet's Theorem) was
formulated; and we established it for the most important class of
Lie superalgebras---the basic classical Lie superalgebras, which are
first classified over the complex numbers by Kac \cite{Kac} and
Scheunert-Nahm-Rittenberg \cite{SNR}. Our work generalized the
earlier work on Lie algebras of reductive algebraic groups by
Kac-Weisfeiler \cite{KW}, Friedlander-Parshall \cite{FP}, Premet
\cite{Pr1,Pr2}, Skryabin \cite{Skr}, and others (see Jantzen
\cite{Jan1} for an excellent review and extensive references on
modular representations of Lie algebras).

In our proof of the super KW conjecture, a $\Z$-grading of the basic
classical Lie superalgebras plays an important role. In order to
obtain the grading, we imposed somewhat restrictive conditions on
$p$ \cite[Section~2.2]{WZ1}.

In \cite{PS}, Premet and Skryabin developed deformation techniques
by considering a family of $\mathscr L$-associative algebras for a
restricted Lie algebra $\mathscr L$ to derive results on dimensions
of simple $\mathscr L$-modules. In particular, their method gives a
new proof of the Kac-Weisfeiler conjecture which differs completely
from Premet's original approach \cite{Pr1}.

\subsection{}
The first main goal of this paper is to generalize some of the ideas
in \cite{PS} to the superalgebra setting. In particular, we provide
a new proof of Super KW conjecture for basic classical Lie
superalgebras so that the over-restrictive assumption on $p$ in
\cite[Section~2.2]{WZ1} is relaxed optimally.

Our second goal is to give a simplicity criterion for baby Verma
modules as well as a semisimplicity criterion for reduced enveloping
superalgebras of basic classical Lie superalgebras with semisimple
$p$-characters.

\subsection{}
Let $\g = \ev\g+\od\g$ be an $(n_0|\,n_1)$-dimensional restricted
Lie superalgebra over $K$ and let $\xi \in \ev\g^*$. Let $S(\g)$ be
the symmetric superalgebra on $\g$. The {\em reduced symmetric
superalgebra} $\rsa{\xi}{\g}$ associated with $\xi$ is defined to be
the quotient of $S(\g)$ by the ideal generated by elements of the
form $(x-\xi(x))^p$ with $x \in \ev\g$. It is a local
(super)commutative superalgebra of dimension $p^{n_0}2^{n_1}$. Let
$\rea{\xi}{\g}$ be the reduced enveloping superalgebra as usual.

Following \cite{PS}, we introduce a family of associative
superalgebras $\raa{\xi}{\la}{\g}$, where $\xi \in \ev\g^*$ and $\la
\in K$, parametrized by the points of the projective space
$\mathbb{P}(\ev\g^*\oplus K)$ (the superalgebras
$\raa{t\xi}{t\la}{\g}$ with $t \in K^\times$ being isomorphic). The
Lie superalgebra $\g$ acts on each $\raa{\xi}{\la}{\g}$ as
derivations. The family relates the reduced enveloping superalgebra
$\rea{\xi}{\g}$ ($= \raa{\xi}{1}{\g}$) to the reduced symmetric
superalgebra $\rsa{\xi}{\g}$ ($= \raa{\xi}{0}{\g}$). As in the Lie
algebra case \cite{PS}, the reduced symmetric superalgebra
$\rsa{\xi}{\g}$ has favorable structures of $\g$-invariant ideals
(cf. Proposition~\ref{prop:sym}).

Following \cite{PS} but with slight modification, we use the method
of associated cones in invariant theory to obtain some results on
the $(p,2)$-divisibility of dimensions of $\g$-modules. In
particular, we show that (Theorem~\ref{thm-div} (ii)) for an
arbitrary restricted Lie superalgebra $\g$ and $\chi \in \ev\g^*$,
if
\begin{itemize}
\item[($\star$)] all nonzero scalar multiples of $\chi$ are conjugate under the
group $G(\ev\g)$ of automorphisms of $\ev\g$ which preserve the
restricted structure,
\end{itemize}
then the super KW conjecture holds for $\rea{\chi}{\g}$. Note that
($\star$) is a non-super condition. Now if $\g$ is one of the basic
classical Lie superalgebras as in Section~\ref{subsec:bcls} with the
optimal assumption on $p$ or the queer Lie superalgebra as in
\cite{WZ2} and if $\chi \in \ev\g^*$ is nilpotent, then condition
($\star$) is satisfied (\cite[Sections~2.8, 2.10]{Jan2}). Thus the
super KW conjecture for basic classical Lie superalgebras and the
queer Lie superalgebra with nilpotent $p$-characters holds. Together
with the Morita equivalence theorem \cite[Theorem~5.2]{WZ1}, this
gives a new proof of the super KW conjecture for basic classical Lie
superalgebras in full generality with the optimal assumption on $p$.

\subsection{} For the reduced enveloping superalgebras of basic
classical Lie superalgebras with semisimple $p$-characters, we give
a simplicity criterion for baby Verma modules. As a consequence, we
obtain a semisimplicity criterion for the reduced enveloping
superalgebras. These results, first announced in
\cite[Remark~4.5]{Z}, generalize results of Rudakov \cite{Rud} and
Friedlander-Parshall \cite{FP} for Lie algebras.

A major complication in the super case is due to the existence of
non-conjugate sets of simple roots. We settle the problem by using
the technique of odd reflections (see \cite{Ser} for example). This
approach is quite different from the proof of the corresponding
results for type I basic classical Lie superalgebras in \cite{Z}.

In his paper \cite{Zh}, C. Zhang independently stated the simplicity
criterion for baby Verma modules with semisimple $p$-characters for
basic classical Lie superalgebras (the statement of
Theorem~\ref{thm:irred-verma}). However, his proof, which relied
essentially on an erroneous lemma \cite[Lemma~3.6]{Zh}, is
incorrect.

\subsection{} The paper is laid out as follows. In Section~\ref{sec:basic-family}, after reviewing some
basic facts about modular representations of Lie superalgebras and
basic classical Lie superalgebras, we introduce the super
generalization of families of associative algebras following
\cite{PS}. Then we study the properties of invariant ideals of the
reduced symmetric superalgebras. The new proof of super KW
conjecture for basic classical Lie superalgebras is given in
Section~\ref{sec:SKW}. Finally, Section~\ref{sec:ss} is devoted to
the study of basic classical Lie superalgebras with semisimple
$p$-characters.


\noindent {\bf Acknowledgments.} The author is very grateful to his
advisor, Weiqiang Wang, for valuable suggestions and advice. He is
deeply indebted to A. Premet and S. Skryabin for their influential
ideas. The author also thanks I. Gordon for helpful discussions.

\section{Restricted Lie superalgebras and families of
$\g$-superalgebras}\label{sec:basic-family}
\subsection{} Throughout we work with an algebraically
closed field $K$ with characteristic $p > 2$ as the ground field. We
exclude $p=2$ since in that case Lie superalgebras coincide with Lie
algebras.

A superspace is a $\Z_2$-graded vector space $V = \ev{V} \oplus
\od{V}$, in which we call elements in $\ev{V}$ (resp. $\od{V}$) even
(resp. odd). Write $|v| \in \Z_2$ for the parity (or degree) of $v
\in V$, which is implicitly assumed to be ($\Z_2$-)homogeneous. A
bilinear form $f$ on $V$ is {\em supersymmetric} if $f(u,v)
=(-1)^{|u||v|} f(v,u)$ for all homogeneous $u,v \in V$. We will use
the notation
$$
\sudim V = \dim \ev{V} | \dim \od V;\qquad \dim V =\dim \ev V + \dim
\od V.
$$
If $W$ is a subsuperspace of $V$, denote
\[
\scodim_{V}W = \sudim V- \sudim W; \qquad \codim_{V} W= \dim V -\dim
W.
\]
Sometimes we simply write $\scodim W$ and $\codim W$ for short when
the total space $V$ is clear from the context.

All Lie superalgebras $\g$ will be assumed to be finite dimensional.
We will use $U(\g)$ to denote its universal enveloping superalgebra.

According to Walls \cite{W}, the finite-dimensional simple
associative superalgebras over $K$ are classified into two types:
besides the usual matrix superalgebra (called type $M$) there are in
addition simple superalgebras of type $Q$.

By vector spaces, derivations, subalgebras, ideals, modules,
submodules, and commutativity, etc. we mean in the super sense
unless otherwise specified.

For a real number $a$, we use $\lfloor a \rfloor$ to denote its
least integer upper bound, and use $\lceil a \rceil$ to denote its
greatest integer lower bound.

\subsection{}
%
%
%
Recall a restricted Lie superalgebra $\g = \ev \g \oplus \od \g$ is
a Lie superalgebra whose even subalgebra $\ev \g$ is a restricted
Lie algebra which admits a $[p]$th power map $\pth{} : \ev \g \ra
\ev \g$ satisfying certain conditions (\cite[Chap.~V]{Jac}), and
whose odd part $\od \g$ is a restricted module by the adjoint action
of the even subalgebra $\ev \g$.

All the Lie (super)algebras in this paper will be assumed to be
restricted.

Let $\g$ be a restricted Lie superalgebra, for each $\chi \in \ev
\g^*$, the {\em reduced enveloping superalgebra} of $\g$ with the
$p$-character $\chi$ is by definition the quotient of $U(\g)$ by the
ideal $I_{\chi}$ generated by all $x^p -\pth x -\chi(x)^p$ with $x
\in \ev \g$.

We further recall the definition of (super)derivations. Let $A = \ev
A \oplus \od A$ be an associative superalgebra. Then its
endomorphism algebra $\End_K(A)$ is naturally $\Z_2$-graded with
\[
\End_K(A)_i = \{f \in \End_K(A) \vert\; f(A_j)\subseteq A_{j+i},
\text{ for }j\in \Z_2\}, \quad i\in \Z_2.
\]
Let $\Der_i(A)$, $i \in \Z_2$, be the subspace of all $\delta \in
\End_K(A)_i$ such that
\[
\delta(xy) = (\delta x)y + (-1)^{i|x|}x (\delta y)
\]
for all homogeneous $x, y \in A$.

The Lie superalgebra of derivations of $A$
\[
\Der(A) = \ev \Der (A) \oplus \od \Der (A)
\]
is a restricted Lie subalgebra of $\End_K(A)$.

\subsection{}\label{sec:KWproperty}
Let $\g$ be a restricted Lie superalgebra. For $\chi\in \ev \g^*$,
we always regard $\chi \in \g^*$ by setting $\chi(\od \g) = 0$.
Denote the centralizer of $\chi$ in $\g$ by $\g_{\chi} =
\g_{\chi,\bar 0} + \g_{\chi,\bar 1}$, where $\g_{\chi,i} = \{ y \in
\g_i \vert \; \chi([y, \g]) = 0 \}$ for $i \in \Z_2$. Set $d_0
|\,d_1 = \scodim \g_\chi$. It is well-known that $d_0$ is even
whereas $d_1$ could be odd.

We recall here the following superalgebra generalization of the
Kac-Weisfeiler Conjecture, which is formulated in \cite{WZ1}.

\vspace{.2cm}

{\noindent \bf Super KW Conjecture}. {\em The dimension of every
$\rea{\chi}{\g}$-module is divisible by $p^{\frac{d_0}{2}}2^{\lfloor
\frac{d_1}{2} \rfloor}$.}

\subsection{}\label{subsec:bcls} The basic classical Lie superalgebras over the complex field
$\mathbb{C}$ were classified independently by Kac \cite{Kac}, and
Scheunert-Nahm-Rittenberg \cite{SNR}. Those Lie superalgebras by
definition admit an even nondegenerate supersymmetric bilinear form,
and the even subalgebras are reductive.

We observe that the basic classical Lie superalgebras are defined
over fields of positive characteristics as well under mild
assumption on $p$ (see \cite[Sect.~2]{WZ1}). The restriction on the
characteristic of fields of definition is listed in the following
table (the general linear Lie superalgebra, though not simple, is
also included).

\vspace{.4cm}

\begin{center}
\begin{tabular}{|c|c|}
\hline
Lie superalgebra & Characteristic of $K$\\
\hline $\gl(m|n)$ & $p > 2$\\
\hline $\mathfrak{sl}(m|n)$ & $p > 2, p\nmid (m-n)$\\
\hline $B(m,n), C(n), D(m,n)$ & $p > 2$ \\
\hline $D(2,1; \alpha)$ & $p > 3$ \\
\hline $F(4)$ & $p >2$ \\
\hline $G(3)$ & $p >3$ \\
\hline
\end{tabular}

\vspace{.1cm}
 TABLE: basic classical Lie $K$-superalgebras
\end{center}

\vspace{.4cm}

Note that for each basic classical Lie superalgebra $\g$, the
restriction on the prime $p$ above makes $p$ automatically {\em
good} for the even subalgebra $\ev \g$ (cf.
\cite[Section~2.6]{Jan2}).


\subsection{} In the following two subsections, we
introduce, following \cite{PS}, a family of associative
superalgebras deformed from the reduced enveloping superalgebras.
This part can be viewed as a super counterpart of
\cite[Sect.~2]{PS}; since the proofs of the statements are
essentially the same as those of the corresponding ones in {\em loc.
cit.}, we will omit them and refer the reader to the original paper.

Let $\g$ be a $n_0\vert\,n_1$-dimensional restricted Lie
superalgebra. A {\em $\g$-superalgebra} is a pair consisting of a
$K$-superalgebra $A$ and a homomorphism $\g \ra \Der A$ of
restricted Lie superalgebras.

Given a linear form $\xi \in \ev \g^*$ and a scalar $\lambda \in K$,
denote by $\raa{\xi}{\lambda}{\g}$ the quotient superalgebra of the
tensor superalgebra $T(\g)$ on the superspace $\g$ by its ideal
$I_{\xi, \lambda}$ generated by all elements $x\otimes y -
(-1)^{|x||y|}y\otimes x -\lambda[x,y]$ for all homogeneous $x, y\in
\g$ and elements $x^{\otimes p}-\lambda^{p-1}\pth{x} -\xi(x)^p\cdot
1$ for all $x \in \ev \g$ . Each $\raa{\xi}{\lambda}{\g}$ is a
$\g$-superalgebra.


If $\lambda =1$, the superalgebra $\raa{\xi}{\lambda}{\g}$ is the
reduced enveloping superalgebra $\rea{\xi}{\g}$; while if $\lambda =
0$, the superalgebra is called the {\em reduced symmetric
superalgebra}, denoted by $\rsa{\xi}{\g}$. Since $x^p -
\xi(x)^p=(x-\xi(x))^p$ for $x\in \ev \g$, by changing of variables
we see that $\rsa{\xi}{\g}$ is isomorphic to the truncated
polynomial superalgebra
$$
K[x_1, \ldots, x_{n_0};y_1,\ldots, y_{n_1}]/(x_1^p,\ldots,
x^p_{n_0};y_1^2,\ldots, y^2_{n_1}),$$ where $K[x_1, \ldots,
x_{n_0};y_1,\ldots, y_{n_1}]$ is the (free) commutative superalgebra
on even generators $\{ x_1, \ldots, x_{n_0}\} $ and odd generators
$\{y_1, \ldots, y_{n_1}\}$. The unique maximal ideal of
$\rsa{\xi}{\g}$ is generated by all $x-\xi(x)\cdot 1$ for $x\in \ev
\g$ and all $y \in \od \g$.

If $t\in K^\times=K\setminus \{0\}$, the map $x \mapsto t^{-1}x$,
where $x\in \g$, extends uniquely to the superalgebra isomorphism
\[
\theta_t: \raa{\xi}{\lambda}{\g} \ra U_{t\xi,t\lambda}(\g).
\]
In particular, if $\lambda \neq 0$, then $\raa{\xi}{\lambda}{\g}
\cong \rea{\lambda^{-1}\xi}{\g}$ as superalgebras. All superalgebra
isomorphism $\theta_t$ are $\g$-equivariant.


\subsection{} A vector bundle $A \ra Z$ over an algebraic variety $Z$ together
with a pair of morphism $\mu : A \times_Z A \ra A$ and $\rho : \g
\times A \ra A$ of algebraic varieties over $Z$ is called a {\em
continuous family of (finite-dimensional) $\g$-superalgebras
parametrized by $Z$} if, for the fiber $A_{\zeta}$ over any point
$\zeta \in Z$,
\begin{itemize}
\item[(1)] the restriction of $\mu$ to $A_{\zeta}\times A_{\zeta}$
gives $A_{\zeta}$ a structure of a finite-dimensional associative
superalgebra.
\item[(2)] the restriction of $\rho$ to $\g \times A_{\zeta}$
induces a homomorphism of restricted Lie superalgebras $\g \ra \Der
A_{\zeta}$.
\end{itemize}

The algebraic variety $Z$ is called the parameter space of the
family. By definition, all $\g$-superalgebras in a family have the
same finite dimension.

The isomorphisms $\theta_t$ allow us pass to a continuous family of
superalgebras parametrized by the projective space $\mathbb{P}(\ev
\g^*\oplus K)$ corresponding to the linear space $\ev\g^* \oplus K$.
Write $(\xi: \lambda)$ for the point of $\mathbb{P}(\ev \g^*\oplus
K)$ represented by the pair $(\xi,\lambda) \neq (0,0)$, where $\xi
\in \ev \g^*$, $\lambda \in K$. Identify $\mathbb{P}(\ev \g^*)$ with
the Zariski closed subset of $\mathbb{P}(\ev \g^*\oplus K)$
consisting of all points $(\xi: \lambda)$ with $\lambda = 0$.
Identify each $\xi \in \ev \g^*$ with the point $(\xi:1)\in
\mathbb{P}(\ev \g^*\oplus K)$.
\begin{proposition}
The set of superalgebras $\raa{\xi}{\lambda}{\g}$ with
$(\xi:\lambda) \in \mathbb{P}(\ev \g^*\oplus K)$ is a continuous
family of $\g$-superalgebras parametrized by $\mathbb{P}(\ev
\g^*\oplus K)$ such that the superalgebras corresponding to the
points $\xi \in \ev\g^*$ and $(\xi:0)\in \mathbb{P}(\ev \g^*)$ of
the parameter space are $\g$-equivariantly isomorphic to
$\rea{\xi}{\g}$ and $\rsa{\xi}{\g}$, respectively.
\end{proposition}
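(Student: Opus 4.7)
Proof proposal. The plan is to follow the strategy of \cite[Sect.~2]{PS} in three steps, adapted to the super setting. Fix ordered bases $x_1,\ldots,x_{n_0}$ of $\ev\g$ and $y_1,\ldots,y_{n_1}$ of $\od\g$, and let $V$ denote the superspace with basis the ordered monomials $y_1^{a_1}\cdots y_{n_1}^{a_{n_1}}x_1^{b_1}\cdots x_{n_0}^{b_{n_0}}$ with $a_i\in\{0,1\}$ and $0\le b_j\le p-1$.

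The first step is a uniform super PBW theorem: for every $(\xi,\la)$ the images of these monomials form a $K$-basis of $\raa{\xi}{\la}{\g}$, so $\dim_K\raa{\xi}{\la}{\g}=p^{n_0}2^{n_1}$. Spanning follows by reordering using the supercommutator relations in $I_{\xi,\la}$ and then truncating using the $p$-power and square generators. Linear independence is obtained by specialization at $\la=0$, where the excerpt already identifies $\raa{\xi}{0}{\g}$ with the truncated polynomial superalgebra $\rsa{\xi}{\g}$ of dimension $p^{n_0}2^{n_1}$, and then extended to arbitrary $(\xi,\la)$ by the standard filtration-degeneration argument. Identifying each $\raa{\xi}{\la}{\g}$ with the single superspace $V$ via this common basis and unwinding the defining relations, the structure constants of the multiplication $\mu_{(\xi,\la)}$ and of the $\g$-action $\rho_{(\xi,\la)}$ depend polynomially on $(\xi,\la)\in\ev\g^*\oplus K$, so we obtain a continuous family of $\g$-superalgebras on the trivial bundle $(\ev\g^*\oplus K)\times V$ over the affine parameter space.

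The second step descends this affine family to $\mathbb P(\ev\g^*\oplus K)$. The substitution $x\mapsto t^{-1}x$ for $t\in K^\times$ scales every ordered monomial of total length $k$ in $V$ by $t^{-k}$, so $V$ splits as a finite direct sum of integer $K^\times$-weight spaces. Combined with the scaling $(\xi,\la)\mapsto(t\xi,t\la)$ on the base, the isomorphisms $\theta_t$ of the excerpt assemble into a $K^\times$-equivariant structure on the restriction of the affine family to $(\ev\g^*\oplus K)\setminus\{0\}$, compatible with $\mu$ and $\rho$. Equivariant descent along the principal $K^\times$-bundle $(\ev\g^*\oplus K)\setminus\{0\}\to\mathbb P(\ev\g^*\oplus K)$ then produces a vector bundle on the projective space---a direct sum of line bundles---together with morphisms $\mu$ and $\rho$ that restrict fiberwise to the superalgebra structure and $\g$-action on each $\raa{\xi}{\la}{\g}$. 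The identifications $\raa{\xi}{1}{\g}=\rea{\xi}{\g}$ and $\raa{\xi}{0}{\g}=\rsa{\xi}{\g}$ are then immediate from the definitions and $\g$-equivariant by construction.

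The principal obstacle will be the uniform super PBW theorem in the first step. Spanning poses no difficulty, but the independence argument must accommodate both the sign twist in the commutator relation and the fact that the odd generators satisfy only a square relation (proportional to $\la$) rather than a $p$-th power relation. Once this is in place, the remaining steps form a routine $K^\times$-equivariant descent proceeding exactly as in \cite[Sect.~2]{PS}.
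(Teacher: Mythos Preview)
Your proposal is correct and follows essentially the same approach as the paper, which simply defers to \cite[Proposition~2.2]{PS}: a uniform PBW basis giving polynomial structure constants on the trivial bundle over $\ev\g^*\oplus K$, followed by $K^\times$-equivariant descent via the isomorphisms $\theta_t$ to obtain the family over $\mathbb{P}(\ev\g^*\oplus K)$. The only point worth tightening is the linear-independence step: rather than invoking specialization at $\la=0$ and a degeneration argument, it is cleaner to note that for $\la\neq 0$ the isomorphism $\theta_\la:\raa{\xi}{\la}{\g}\to\rea{\la^{-1}\xi}{\g}$ already forces $\dim\raa{\xi}{\la}{\g}=p^{n_0}2^{n_1}$, while the case $\la=0$ is handled by the explicit description of $\rsa{\xi}{\g}$ given just before the proposition.
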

\begin{proof}
The proof is the same as that of \cite[Proposition~2.2]{PS}, and
will be omitted here.
\end{proof}

\begin{lemma}\label{lem:dim-closed}
Let $\pi : A \ra Z$ be a continuous family of $\g$-superalgebras
parametrized by an algebraic variety $Z$. Then, for any positive
integer $d$, the set of all points $\zeta \in Z$ such that the
corresponding superalgebra $A_{\zeta}$ contains a $\g$-invariant
two-sided ideal of dimension $d$ is closed in $Z$.
\end{lemma}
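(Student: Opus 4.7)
The plan is a direct super-version of the standard Grassmannian incidence-variety argument. Since $\pi:A\ra Z$ is a vector bundle of $\g$-superalgebras, the even and odd parts form subbundles $\ev{A},\od{A}\subset A$, and a $\g$-invariant two-sided ideal of a fibre is, by the super conventions in force, automatically $\Z_2$-graded. Thus it suffices, for each pair $(d_0,d_1)$ with $d_0+d_1=d$ (there are only finitely many), to show that the locus $Z_{d_0,d_1}\subseteq Z$ of points $\zeta$ for which $A_\zeta$ admits a $\g$-invariant two-sided ideal of super-dimension $d_0\vert\,d_1$ is closed; the desired set is then $\bigcup_{d_0+d_1=d}Z_{d_0,d_1}$.

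I would next form the relative super-Grassmannian
\[
G=\mathrm{Gr}_{d_0}(\ev{A})\times_{Z}\mathrm{Gr}_{d_1}(\od{A}),
\]
with structure map $\pi_G:G\ra Z$. Because each Grassmannian bundle is projective over $Z$, the morphism $\pi_G$ is proper. On $G$ there is a tautological graded subbundle $\mathcal{I}=\ev{\mathcal{I}}\oplus\od{\mathcal{I}}\subset \pi_G^*A$ whose fibre over $(\zeta,I)$ is $I$.

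Then I would cut out the subvariety $X_{d_0,d_1}\subseteq G$ on which $\mathcal{I}$ is a two-sided ideal and is $\g$-invariant. Using that $\mu$ and $\rho$ are morphisms of algebraic varieties over $Z$ (the defining property of a continuous family), the two-sided ideal condition is the simultaneous vanishing of the bundle morphisms
\[
\mathcal{I}\otimes\pi_G^*A\xrightarrow{\mu}\pi_G^*A\twoheadrightarrow \pi_G^*A/\mathcal{I},\qquad \pi_G^*A\otimes\mathcal{I}\xrightarrow{\mu}\pi_G^*A\twoheadrightarrow \pi_G^*A/\mathcal{I},
\]
and, after choosing a homogeneous $K$-basis $x_1,\dots,x_m$ of $\g$, the $\g$-invariance condition reduces to vanishing of the finitely many composites $\mathcal{I}\xrightarrow{\rho(x_k,\,\cdot\,)}\pi_G^*A\twoheadrightarrow \pi_G^*A/\mathcal{I}$. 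Each is a morphism of vector bundles on $G$, so its vanishing locus is Zariski closed; intersecting them yields the closed subvariety $X_{d_0,d_1}$.

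Finally, $Z_{d_0,d_1}=\pi_G(X_{d_0,d_1})$ is closed in $Z$ because $\pi_G$ is proper. The only step with any real content is verifying that "being an ideal" and "being $\g$-invariant" are closed conditions on the relative Grassmannian, and this is handled uniformly by the standard principle that vanishing loci of morphisms of vector bundles are closed; no serious obstacle arises beyond keeping track of the even/odd splitting so that the super-Grassmannian (rather than the ordinary Grassmannian of $A$) is used as the parameter space.
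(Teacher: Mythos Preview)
Your argument is correct and is essentially the same Grassmannian incidence-variety argument as in the paper (which in turn defers to \cite[Lemma~2.3]{PS}). The only cosmetic difference is that the paper works inside the ordinary Grassmann bundle $G_d(A)\to Z$ and carves out the closed subvariety of $\Z_2$-graded subspaces via the parity involution $\sigma(v)=(-1)^{|v|}v$ (a subspace is graded iff it is $\sigma$-stable, a closed condition), whereas you parametrize graded subspaces directly as $\mathrm{Gr}_{d_0}(\ev{A})\times_Z\mathrm{Gr}_{d_1}(\od{A})$ and take the finite union over $d_0+d_1=d$; these two parameter spaces are canonically identified, and the remaining steps (closedness of the ideal and $\g$-invariance conditions, properness of the projection) are identical.
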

\begin{proof}
For a superspace $V$, let $\sigma: V \ra V$ be the linear
transformation whose action on the homogeneous elements is given by
\[
\sigma(v) = (-1)^{|v|}v.
\]
Then a subspace $W$ of $V$ is graded if and only if $\sigma(W)=(W)$.

Let $\varphi: G_d(A)\ra Z$ be the Grassmann bundle of
$d$-dimensional subspaces corresponding to the vector bundle $\pi:
A\ra Z$. Then the subvariety $G^{\text{gr}}_d(A) \subseteq G_d(A)$
of graded subspaces of dimension $d$ is closed.

Given this, the rest of the proof is the same as the proof of
\cite[Lemma~2.3]{PS}.
\end{proof}

%

\subsection{} In the rest of this section, we study the properties of invariant ideals of the
reduced symmetric superalgebras. This can be viewed as the super
counterpart of \cite[Section~3]{PS}. It turns out that most
statements and their proofs in {\em loc. cit.} generalize to the
super setup trivially. As we did in the previous two subsections, we
will only state the facts without proof when their proofs are
straightforward generalization of the corresponding ones in {\em
loc. cit.}.

Let $\p$ be a restricted subalgebra of $\g$, and $\xi \in \ev \g^*$.
For any $\rea{\xi}{\p}$-module $V$, the superspace
\[
\widetilde{V}= \Hom_{\rea{\xi}{\p}}(\rea{\xi}{\g}, V)
\]
carries a standard $\rea{\xi}{\g}$-module structure given by
\[
(xf)(v)=(-1)^{|x|(|f|+|v|)}f(vx)
\]
where $x,v \in \rea{\xi}{\g}$, and $f \in
\Hom_{\rea{\xi}{\p}}(\rea{\xi}{\g}, V)$ are homogeneous elements.
This module is called the $\rea{\xi}{\g}$-module {\em coinduced}
from $V$.

Let $A$ be a $\p$-superalgebra. The restricted $\g$-module
$\widetilde{A} = \Hom_{\rea{0}{\p}}(\rea{0}{\g}, A)$ coinduced from
$A$ carries a superalgebra structure such that the $\g$ acts on
$\widetilde{A}$ as superderivations. The multiplication in
$\widetilde{A}$ is given by the formula
\[
(f\cdot g)(u) = \sum_{(u)} (-1)^{|g||u_{(1)}|}f(u_{(1)})g(u_{(2)}),
\]
where $f,g \in \widetilde{A}$ and $u \in \rea{0}{\g}$ are
homogenous, and where $u \mapsto \sum u_{(1)}\otimes u_{(2)}$ is the
comultiplication of $\rea{0}{\g}$.

\subsection{}

Let $\p$ be a restricted subalgebra of $\g$. Write
\[
\Fp =\Hom_{\rea{0}{\p}}(\rea{0}{\g}, K),
\]
where $K$ denotes the trivial $\rea{0}{\p}$-module.
\begin{lemma}\label{lem:Fpg}
The superalgebra $\Fp$ is $\g$-simple and commutative. Moreover, it
is isomorphic to a truncated symmetric superalgebra. The unique
maximal ideal $\m(\g, \p)$ of $\Fp$ consists of all $f \in \Fp$
satisfying $f(1)=0$.
\end{lemma}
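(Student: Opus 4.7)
The plan is to follow the Lie algebra argument of Premet--Skryabin \cite[Section~3]{PS} closely, adapting each step to the super setting by tracking the parity-dependent signs. There are four assertions to establish: (i) $\Fp$ is commutative; (ii) $\Fp$ is isomorphic to a truncated symmetric superalgebra; (iii) the augmentation kernel is the unique maximal ideal $\m(\g,\p)$; and (iv) $\Fp$ is $\g$-simple. The strategy is to prove (i)--(iii) first and then use the resulting explicit model of $\Fp$ to deduce (iv).

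For (i) and (iii), the key point is that $\rea{0}{\g}$ is a super-cocommutative Hopf superalgebra, i.e. $\sum u_{(1)}\otimes u_{(2)} = \sum (-1)^{|u_{(1)}||u_{(2)}|}\, u_{(2)}\otimes u_{(1)}$. Substituting this into the multiplication formula for $\Fp$ displayed just before the lemma, and using that $K$ sits in even degree so that only the terms with $|u_{(1)}|=|f|$ and $|u_{(2)}|=|g|$ contribute, one obtains $f\cdot g = (-1)^{|f||g|}\, g\cdot f$. The evaluation $\epsilon:\Fp\to K$, $f\mapsto f(1)$, is a superalgebra homomorphism (because $\Delta(1)=1\otimes 1$), and its kernel is therefore a codimension-one ideal. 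For (ii), I fix a graded subspace complement $\mathfrak{q}=\ev{\mathfrak{q}}\oplus\od{\mathfrak{q}}$ of $\p$ in $\g$ of superdimension $k_0 | k_1$. By the super PBW theorem, $\rea{0}{\g}$ is a free right $\rea{0}{\p}$-module of rank $p^{k_0}2^{k_1}$, so $\dim\Fp = p^{k_0}2^{k_1}$. Choosing elements $\phi_1,\dots,\phi_{k_0}\in\ev{\Fp}$ and $\psi_1,\dots,\psi_{k_1}\in\od{\Fp}$ dual to the degree-one PBW monomials in $\mathfrak{q}$ and computing via the multiplication formula, one verifies that they lie in $\ker\epsilon$ and satisfy the truncated supercommutative relations $\phi_i^p = 0$, $\psi_j^2 = 0$; a count of monomials together with Nakayama's lemma shows they generate $\Fp$ and identify it with the truncated symmetric superalgebra on $\mathfrak{q}^*$. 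In particular $\Fp$ is local, so $\m(\g,\p)=\ker\epsilon$ is the unique maximal ideal.

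For (iv), the identification from (ii) presents $\Fp$ as a Frobenius local superalgebra whose socle is one-dimensional and spanned by the ``top'' monomial $t=\phi_1^{p-1}\cdots\phi_{k_0}^{p-1}\psi_1\cdots\psi_{k_1}$; in particular, every nonzero ideal of $\Fp$ contains $t$. So it suffices to show that some element of $U(\g)$ carries $t$ to a unit in $\Fp$. The key computation $(xf)(1)=(-1)^{|x||f|}f(x)$ for $x\in\g$, together with the filtration estimate $\g\cdot\m(\g,\p)^n\subseteq\m(\g,\p)^{n-1}$, shows that the elements of a basis of $\mathfrak{q}$ act on the generators $\phi_i,\psi_j$, modulo $\m(\g,\p)$, as the dual partial superderivations. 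Iterating this, an appropriate ordered product of such basis elements applied to $t$ produces a nonzero scalar modulo $\m(\g,\p)$, hence a unit, forcing any nonzero $\g$-stable ideal to equal $\Fp$.

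The main obstacle will be the sign and filtration bookkeeping in step (iv): one must show that the $\g$-action on the associated graded of $\Fp$ with respect to the $\m(\g,\p)$-adic filtration really does coincide, up to signs, with the standard partial-superderivation action on the truncated symmetric superalgebra on $\mathfrak{q}^*$. This requires care with super signs in both the PBW expansion of products $v\cdot x$ in $\rea{0}{\g}$ and the formula $(xf)(v)=(-1)^{|x|(|f|+|v|)}f(vx)$, so that the descending ``derivation chain'' from $t$ down to $1$ runs through without cancellation.
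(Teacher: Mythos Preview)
Your proposal is correct and follows essentially the same route as the paper's proof. The paper also chooses a graded complement to $\p$, writes out the explicit comultiplication formula on the corresponding PBW monomials to identify the dual generators $\phi_i,\psi_j$ and verify $\phi_i^p=0$, $\psi_j^2=0$, and then proves $\g$-simplicity by computing directly that the chosen $x_i,y_j\in\g$ act on the monomials $\phi^{\mathbf a}\psi^{\mathbf b}$ as degree-lowering operators (partial superderivations modulo higher-order terms), so that any nonzero element can be pushed down to one with nonzero constant term; your socle-based variant (every nonzero ideal contains the top monomial $t$, and the same lowering chain sends $t$ to a unit) is the same computation read from the top down rather than from an arbitrary element.
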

\begin{proof}
Let $\{x_1, \ldots, x_s\}$ (resp. $\{y_1, \ldots, y_t\}$) be
elements in $\ev \g$ (resp. $\od \g$) such that their images form a
basis for $\ev \g /{\ev \p}$ (resp. $\od \g /{\od \p}$).

Let
\begin{align*}
\ev \La= & \{\mathbf{a}=(a_1, \ldots, a_s)\vert\; 0\leq a_i \leq p-1
\text{ are integers} \};\\
\od \La=& \{\mathbf{b}=(b_1, \ldots, b_r)\vert \; 1 \leq b_1 <
\ldots <b_r \leq t \text{ are integers };0\leq r \leq t\}.
\end{align*}
For $\mathbf{a}=(a_1,\ldots,a_s)$ and
$\mathbf{a}'=(a_1',\ldots,a_s')$ in $\ev \La$, denote $\mathbf{a}!=
\prod(a_i !)$. Write $\mathbf{a}' \leq \mathbf{a}$ if $a_i' \leq
a_i$ for all $i$. Further put ${\mathbf{a} \choose
\mathbf{a}'}=\prod {a_i \choose a_i'}$ when $\mathbf{a'} \leq
\mathbf{a}$. For $\mathbf{b}=(b_1, \ldots, b_r)$ and
$\mathbf{b}'=(b'_1, \ldots, b'_l)$ in $\od \La$, write $\mathbf{b}'
\leq \mathbf{b}$ if $(b'_1, \ldots, b'_l)$ appears in $(b_1, \ldots,
b_r)$ as a subsequence. Also, when $\mathbf{b}' \leq \mathbf{b}$,
define $\text{sgn}(\mathbf{b}',\mathbf{b})$ to be the sign of the
permutation of sequence $\mathbf{b}$ given by $(\mathbf{b}',
\mathbf{b}\setminus \mathbf{b}')$, where $\mathbf{b}\setminus
\mathbf{b}'$ denotes the subsequence of $\mathbf{b}$ formed by
removing the subsequence $\mathbf{b}'$ from $\mathbf{b}$.

For $\mathbf{a}=(a_1,\ldots, a_s) \in \ev\La$ and $\mathbf{b}=(b_1,
\ldots, b_r) \in \od \La$, write
\[
e^{(\mathbf{a},\mathbf{b})}=x_1^{a_1}\cdots x_s^{a_s}y_{b_1}\cdots
y_{b_r}.
\]
Then $\rea{0}{\g}$ is a free $\rea{0}{\p}$-module on basis
\[
\{e^{(\mathbf{a},\mathbf{b})} \vert \; \mathbf{a} \in
\ev\La,\;\mathbf{b}\in \od\La\}.
\]
The comultiplication of $\rea{0}{\g}$ on
$e^{(\mathbf{a},\mathbf{b})}$ is given by
\begin{equation}\label{equ:comultiplication}
\Delta(e^{(\mathbf{a},\mathbf{b})})=\sum_{\mathbf{a'}\leq\mathbf{a};\mathbf{b}'\leq
\mathbf{b}}{\mathbf{a} \choose
\mathbf{a}'}\text{sgn}(\mathbf{b}',\mathbf{b})e^{(\mathbf{a}',\mathbf{b}')}\otimes
e^{(\mathbf{a}-\mathbf{a}',\mathbf{b}\setminus\mathbf{b}')}.
\end{equation}

Let $\phi_i \in \ev \Fp$ (resp. $\psi_j \in \od \Fp$) be the dual
element of $x_i$ for $1 \leq i \leq s$ (resp. $y_j$ for $1 \leq j
\leq t$).

Equation~(\ref{equ:comultiplication}) inductively shows that, for
$\mathbf{a}, \mathbf{a}'\in \ev \La$ and $\mathbf{b}, \mathbf{b}'
\in \od \La$,
\[
\phi^{\mathbf{a}}\psi^{\mathbf{b}}(e^{(\mathbf{a}',\mathbf{b}')})=
\mathbf{a}!
\delta_{(\mathbf{a},\mathbf{b}),(\mathbf{a}',\mathbf{b}')},
\]
where we use the notation
\[
\phi^{\mathbf{a}}=\phi_1^{a_1}\cdots \phi_s^{a_s}, \qquad
\psi^{\mathbf{b}}=\psi_{b_1}\cdots \psi_{b_r},
\]
for $\mathbf{a}=(a_1, \ldots, a_s)$, $\mathbf{b}=(b_1, \ldots,
b_r)$.

Then $\Fp$ is an associative superalgebra with unit element and
generators $\phi_1, \ldots, \phi_s$ and $\psi_1,\ldots, \psi_t$,
which satisfy $\phi^p_i=0$ and $\psi^2_j=0$ for all $i,j$. As its
dimension is $p^s2^t$, there is an isomorphism
\[
K[x_1, \ldots, x_{s};y_1,\ldots, y_{t}]/(x_1^p,\ldots,
x^p_{s};y_1^2,\ldots, y^2_{t}) \cong \Fp.
\]

To see it is $\g$-simple, we note inductively from
equation~(\ref{equ:comultiplication}) that the action of $\g$ on
some of the basis vectors of $\Fp$ is given as follows:
\begin{eqnarray*}
\lefteqn{x_i\cdot \phi_1^{a_1}\cdots \phi_s^{a_s}\psi^{\mathbf{b}}}\\
& =& \begin{cases} 0 & \text{if }a_i =0,\\
\lambda \phi_1^{a_1}\cdots \phi_i^{a_i-1}\cdots
\phi_s^{a_s}\psi^{\mathbf{b}}
& \text{if }2\leq a_i\leq p-1,\\
\mu \phi_1^{a_1}\cdots
\phi_{i-1}^{a_{i-1}}\phi_{i+1}^{a_{i+1}}\cdots
\phi_s^{a_s}\psi^{\mathbf{b}}+ \nu \phi_1^{a_1}\cdots
\phi_i^{p-1}\cdots \phi_s^{a_s}\psi^{\mathbf{b}} & \text{if } a_i=1;
\end{cases}\\
\lefteqn{y_j \cdot \psi_{j_1}\cdots \psi_{j_r}} \\
&=& \begin{cases} 0 & \text{if } j \notin \{j_1,\ldots, j_r\},\\
\pm \psi_{j_1}\cdots\hat{\psi_j}\cdots \psi_{j_r} &
\text{otherwise}.
\end{cases}
\end{eqnarray*}
where $\lambda$, $\mu$, and $\nu$ are in $K$ with $\lambda$, $\mu$
nonzero.

Given any nonzero element in $\Fp$, by applying a suitable sequence
of $x_i$'s and $y_j$'s, we will eventually arrive at a linear
combination of basis vectors $\phi^{\mathbf{a}}\psi^{\mathbf{b}}$
with nonzero constant term. On the other hand, since $\phi_i^p=0$
and $\psi_j^2=0$, every nonzero $\g$-invariant ideal is nilpotent
and contains an element with nonzero constant term. It has to be the
whole thing. Hence $\Fp$ is $\g$-simple. The rest of the statement
is clear.
\end{proof}

Let $B=\ev B \oplus \od B$ be a finite dimensional unital
commutative associative $\g$-superalgebra. The superalgebra $B$ is
said to be $\g$-simple if it contains no nonzero $\g$-invariant
ideals. Arguing as in \cite[3.2]{PS}, we can show that if $B$ is
$\g$-simple, then it is a local superalgebra with the unique maximal
ideal $\m = \ev \m \oplus \od B$, where $\ev \m$ consists of the
elements $b \in \ev B$ such that $b^p=0$.

\begin{proposition}\label{prop:simple g-superalgebra}
Let $B$ be a $\g$-simple finite dimensional unital commutative
$\g$-superalgebra. Denote by $\m$ the maxiaml ideal, and by $\p$ the
normalizer of $\m$ in $\g$. Then there is a canonical
$\g$-equivariant superalgebra isomorphism $B \cong \Fp$.
\end{proposition}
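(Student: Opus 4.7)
The plan is to construct a canonical $\g$-equivariant superalgebra map $\Phi: B \to \Fp$ as the coinduction/restriction adjoint of the quotient $\epsilon: B \to B/\m = K$, and to show it is an isomorphism by combining the $\g$-simplicity of $B$ (for injectivity) with a cotangent-space calculation and a Nakayama-style lifting (for surjectivity). Since $B$ is local with residue field $K$, I split $B = K\cdot 1 \oplus \m$ and let $\epsilon: B \to K$ be the augmentation. By the definition of $\p$ and the fact that $\g$ acts by superderivations, every $y \in \p$ preserves $\m$ and annihilates $1$, so $\epsilon$ is $\p$-equivariant with $K$ the trivial $\p$-module. Its Frobenius adjoint is the $\g$-equivariant map $\Phi(b)(u) = \epsilon(u\cdot b)$ for $b \in B$, $u \in \rea{0}{\g}$; one checks $\Phi(b) \in \Fp$ because for $y$ in the augmentation ideal of $\rea{0}{\p}$ we have $y\cdot B \subseteq \m$, forcing $\Phi(b)(yu) = 0$. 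That $\Phi$ is a superalgebra morphism follows by expanding $u\cdot(ab) = \sum(-1)^{|u_{(2)}||a|}(u_{(1)}\cdot a)(u_{(2)}\cdot b)$ (the super-Leibniz rule in Sweedler notation), using that $\epsilon$ is a $K$-algebra map (as $B/\m \cong K$), and matching with the multiplication on $\widetilde{K} = \Fp$; the signs match on nonzero terms because $\Phi(a)(u_{(1)}) = \epsilon(u_{(1)}\cdot a)$ vanishes unless $|u_{(1)}| = |a|$, so $(-1)^{|u_{(2)}||a|} = (-1)^{|b||u_{(1)}|}$.

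Injectivity is then immediate: $\ker\Phi = \{b\in B : \rea{0}{\g}\cdot b \subseteq \m\}$ is $\g$-invariant, and the super-Leibniz rule together with $\m\cdot B \subseteq \m$ shows it is also an ideal of $B$; since $\Phi(1) = 1$, the $\g$-simplicity of $B$ forces $\ker\Phi = 0$. For surjectivity the key step is a cotangent-space comparison. Define $\bar\alpha_B: \g \to (\m/\m^2)^*$ by $x \mapsto (\bar m \mapsto \epsilon(x\cdot m))$; this is well-defined on $\m/\m^2$ because $\epsilon(x\cdot (m_1 m_2)) = 0$ for $m_1, m_2 \in \m$ (expand via the super-Leibniz rule and use $\epsilon(\m) = 0$), and its kernel is exactly $\p$ by the defining property of the normalizer. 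The analogous map $\bar\alpha_\Fp : \g \to (\tilde{\m}/\tilde{\m}^2)^*$, where $\tilde{\m} = \m(\g,\p)$ is the maximal ideal of $\Fp$, also has kernel $\p$, and the dimension count $\dim \tilde{\m}/\tilde{\m}^2 = \dim \g/\p$ (from Lemma~\ref{lem:Fpg}) promotes it to an isomorphism $\g/\p \xrightarrow{\sim} (\tilde{\m}/\tilde{\m}^2)^*$. The $\g$-equivariance of $\Phi$, together with the identity $\tilde\epsilon_\Fp\circ\Phi = \epsilon$ (evaluate at $1$), yields $\bar\Phi^*\circ\bar\alpha_\Fp = \bar\alpha_B$; since the right-hand side is injective on $\g/\p$ and $\bar\alpha_\Fp$ is bijective, $\bar\Phi^*$ must be injective, equivalently $\bar\Phi: \m/\m^2 \to \tilde{\m}/\tilde{\m}^2$ is surjective.

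A Nakayama-style lifting finishes the proof. Choose preimages $\tilde\phi_i, \tilde\psi_j \in \Phi(\m)$ of a generating set $\bar\phi_i, \bar\psi_j$ of $\tilde{\m}/\tilde{\m}^2$; the subalgebra $A$ of $\Phi(B)$ generated by $1$ and these lifts satisfies $A + \tilde{\m}^{k+1} = \Fp$ for every $k$ by descending induction on $k$, since any degree-$k$ monomial in $\phi_i, \psi_j$ agrees modulo $\tilde{\m}^{k+1}$ with the corresponding monomial in the lifts. Taking $k$ past the nilpotency index of $\tilde{\m}$ gives $A = \Fp$, whence $\Phi(B) = \Fp$. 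The main technical obstacles I anticipate are the super-sign bookkeeping required to verify $\Phi$ is a superalgebra homomorphism, and the derivation of the cotangent identity $\bar\Phi^*\circ\bar\alpha_\Fp = \bar\alpha_B$; once those are in place, the remainder of the argument is formal.
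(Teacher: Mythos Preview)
Your argument is correct and is precisely the natural super-analogue of the argument in \cite[Thm.~3.2]{PS} that the paper defers to in lieu of a proof: construct $\Phi$ as the Frobenius adjoint of the augmentation $B\to B/\m=K$, obtain injectivity from $\g$-simplicity, and deduce surjectivity from the cotangent identification $\g/\p\cong(\tilde\m/\tilde\m^2)^*$ together with Nakayama. One small correction worth flagging now rather than later: with the paper's action convention $(xf)(v)=(-1)^{|x|(|f|+|v|)}f(vx)$ on coinduced modules, the formula $\Phi(b)(u)=\epsilon(u\cdot b)$ is not literally $\g$-equivariant---you need the sign-twisted version $\Phi(b)(u)=(-1)^{|b||u|}\epsilon(u\cdot b)$, after which both the equivariance and the multiplicativity checks (and hence the cotangent identity $\bar\Phi^*\circ\bar\alpha_{\Fp}=\bar\alpha_B$) go through exactly as you outline.
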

\begin{proof}
The proof is similar to the proof of \cite[Thm.~3.2]{PS}, and will
be skipped here.
\end{proof}

\subsection{} Let $B$ be a commutative
$\g$-superalgebra and $\xi \in \ev \g^*$. By a $(B,
\rea{\xi}{\g})$-module, we mean a $\rea{\xi}{\g}$-module which is
also a module over superalgebra $B$ such  that the module structure
map $B\otimes M \ra M$ is a $\g$-module homomorphism. A $(B,
\g)$-superalgebra is a $K$-superalgebra $C$, which is simultaneously
a $B$-superalgebra and $\g$-superalgebra and a $(B,
\rea{0}{\g})$-module.

Now let $B=\Fp$. For any $\rea{\xi}{\p}$-module $V$, the coinduced
$\rea{\xi}{\g}$-module $\widetilde{V}$ carries a canonical
$(\Fp,\rea{\xi}{\g})$-module structure given by
\[
(f\cdot \psi)(u) = \sum_{(u)}
(-1)^{|\psi||u_{(1)}|}f(u_{(1)})\psi(u_{(2)}),
\]
where $f \in \Fp$, $\psi \in \widetilde{V}$, and $u \in
\rea{\xi}{\g}$ are homogeneous.

If $A$ is a $\p$-superalgebra, then the $(\Fp, \rea{0}{\g})$-module
$\widetilde{A} = \Hom_{\rea{0}{\p}}(\rea{0}{\g}, A)$ has a
$\g$-invariant multiplication, it is $\Fp$-bilinear as well.
Therefore, $\widetilde{A}$ is an $(\Fp, \g)$-superalgebra.

\begin{proposition}\label{prop:imprimitivity}
Let $M$ be an $(\Fp, \rea{\xi}{\g})$-module and $C$ an $(\Fp,
\g)$-superalgebra. Then,
\begin{itemize}
\item[(i)] $M \cong \Hom_{\rea{\xi}{\p}}(\rea{\xi}{\g},
M/{\m(\g,\p)M})$ as $(\Fp, \rea{\xi}{\g})$-module.
\item[(ii)] $C \cong \Hom_{\rea{0}{\p}}(\rea{0}{\g}, C/{\m(\g,\p)C})$
as $(\Fp, {\g})$-superalgebras.
\end{itemize}
\end{proposition}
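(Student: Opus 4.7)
The plan is to build in both (i) and (ii) a single natural evaluation map, show it is a morphism of the relevant structures, and then prove it is bijective by a dimension count plus a nilpotency (Nakayama-style) argument using that $\Fp$ is local with nilpotent maximal ideal $\m(\g,\p)$.

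First, I would set up the $\rea{\xi}{\p}$-module $M/\m(\g,\p)M$. Since $\p$ is by definition the normalizer in $\g$ of $\m(\g,\p)$, the subspace $\m(\g,\p)M\subseteq M$ is stable under the $\p$-action, so the quotient carries a $\p$-module structure induced from the $\rea{\xi}{\g}$-action. The relations $x^p-\pth{x}=\xi(x)^p$ for $x\in\ev\p$ hold on $M$ and therefore persist on the quotient, so $M/\m(\g,\p)M$ is a $\rea{\xi}{\p}$-module. Let $\epsilon:\Fp\to K$, $\epsilon(f)=f(1)$ denote the augmentation whose kernel is $\m(\g,\p)$ (Lemma~\ref{lem:Fpg}); then $\Fp$ acts on $M/\m(\g,\p)M$ via $\epsilon$.

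Next I would define the map
\[
\Phi_M:M\;\longrightarrow\;\Hom_{\rea{\xi}{\p}}\!\bigl(\rea{\xi}{\g},\,M/\m(\g,\p)M\bigr),\qquad \Phi_M(m)(u)=\overline{u\cdot m},
\]
where the bar denotes reduction modulo $\m(\g,\p)M$ (with the standard super sign conventions inserted so that $\Phi_M$ is parity-preserving). Verifying that $\Phi_M(m)$ is $\rea{\xi}{\p}$-linear is immediate from the fact that the $\rea{\xi}{\p}$-action on the target is the one induced from $\rea{\xi}{\g}$, and verifying that $\Phi_M$ intertwines the $\rea{\xi}{\g}$-actions uses the formula $(xf)(v)=(-1)^{|x|(|f|+|v|)}f(vx)$ given for the coinduced module. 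The subtle point is $\Fp$-linearity: one has to show $\Phi_M(f\cdot m)=f\cdot\Phi_M(m)$, where the target carries the $(\Fp,\rea{\xi}{\g})$-module structure $(f\cdot\psi)(u)=\sum_{(u)}(-1)^{|\psi||u_{(1)}|}f(u_{(1)})\psi(u_{(2)})$. The check expands the $\g$-derivation action of $u$ on the product $f\cdot m$ via the comultiplication of $\rea{0}{\g}$, and then uses that the $\Fp$-action on $M/\m(\g,\p)M$ factors through $\epsilon$ so that each factor $(u_{(1)}\cdot f)\in\Fp$ reduces to the scalar $(u_{(1)}\cdot f)(1)=\pm f(u_{(1)})$. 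Sign-chasing is the main obstacle in this step; the compatibility is engineered exactly so that these two conventions line up.

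For bijectivity, a dimension count shows both sides have dimension $p^{s}2^{t}\cdot\dim M/\m(\g,\p)M$, where $s\mid t=\scodim_\g\p$, because $\rea{\xi}{\g}$ is free over $\rea{\xi}{\p}$ of this rank. Thus it suffices to show $\Phi_M$ is injective: if $\Phi_M(m)=0$ then taking $u=1$ gives $m\in\m(\g,\p)M$; iterating with $u$ ranging over a basis of $\rea{\xi}{\g}$ and applying the derivation identity moves $m$ into $\m(\g,\p)^{k}M$ for every $k$, and nilpotency of $\m(\g,\p)$ in the local algebra $\Fp$ gives $m=0$. This proves (i). For (ii), apply (i) with $\xi=0$ and $V=C$ to obtain the $(\Fp,\rea{0}{\g})$-module isomorphism; what remains is to check that $\Phi_C$ is multiplicative. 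The quotient $C/\m(\g,\p)C$ inherits a $\p$-superalgebra structure because $\m(\g,\p)C$ is a $\p$-stable ideal, and the multiplication on $\widetilde{C/\m(\g,\p)C}$ described earlier via the comultiplication of $\rea{0}{\g}$ is designed so that the equality $\Phi_C(c_1c_2)(u)=(\Phi_C(c_1)\cdot\Phi_C(c_2))(u)$ reduces, after expanding $u\cdot(c_1c_2)$ by the Leibniz rule along $\Delta(u)$ and passing to the quotient, to the defining formula for the product in $\widetilde{C/\m(\g,\p)C}$. Here again the only real difficulty is super-sign bookkeeping.
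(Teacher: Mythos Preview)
Your evaluation map $\Phi_M$ and the verification that it respects the $(\Fp,\rea{\xi}{\g})$-module structure are the right ingredients, and this is indeed the line of argument in \cite[Thm.~3.3]{PS} to which the paper defers. Two points need repair.

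\emph{Minor.} In this proposition $\p$ is an arbitrary restricted subalgebra, not ``by definition the normalizer of $\m(\g,\p)$'' (that is the situation of Proposition~\ref{prop:simple g-superalgebra}). Your argument is unaffected: for any $x\in\p$ one has $(xf)(1)=\pm f(x)=0$ because $f$ is $\rea{0}{\p}$-linear and $K$ is the trivial $\p$-module, so $\p$ always stabilizes $\m(\g,\p)$.

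\emph{Main gap.} Your ``dimension count'' only computes the dimension of the coinduced side: freeness of $\rea{\xi}{\g}$ over $\rea{\xi}{\p}$ gives $\dim\widetilde{V}=p^s2^t\dim V$ with $V=M/\m(\g,\p)M$, but it says nothing about $\dim M$. The equality $\dim M=p^s2^t\dim V$ is precisely the statement that $M$ is free over $\Fp$, which is a \emph{consequence} of the proposition (since $\widetilde{V}$ is visibly $\Fp$-free), not an input. As written, the argument is circular at this step.

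The fix is to replace the dimension count by a direct surjectivity argument. One checks that $\m(\g,\p)\widetilde{V}=\{\psi:\psi(1)=0\}$, so evaluation at $1$ identifies $\widetilde{V}/\m(\g,\p)\widetilde{V}\cong V$; under this identification the induced map $\overline{\Phi_M}:M/\m(\g,\p)M\to\widetilde{V}/\m(\g,\p)\widetilde{V}$ is the identity on $V$. Since $\m(\g,\p)$ is nilpotent, Nakayama gives that $\Phi_M$ is surjective. Your filtration argument for injectivity (pushing $m$ into successive powers $\m(\g,\p)^kM$ via the elements $e^{(\mathbf a,\mathbf b)}$ and the pairing $(e^{(\mathbf a',\mathbf b')}\cdot\phi^{\mathbf a}\psi^{\mathbf b})(1)=\pm\mathbf a!\,\delta$ from Lemma~\ref{lem:Fpg}) is correct once written out carefully, and together with surjectivity yields the isomorphism. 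Part (ii) then follows as you outline.
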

\begin{proof}
The proof is similar to proof of \cite[Thm~3.3]{PS}, and will be
skipped here.
\end{proof}

\subsection{}
Let $\xi \in \ev \g^*$. Recall the centralizer of $\xi$ in $\g$ is
denoted by $\g_\xi$, which is a restricted Lie subalgebra. Put $d_0
\vert\, d_1 = \scodim \g_{\xi}$.

\begin{proposition}\label{prop:sym}
Let $\xi \in \ev \g^*$ and $d_0 \vert\, d_1 = \scodim\g_{\xi}$. Then
each $\g$-invariant ideal of $\rsa{\xi}{\g}$ has codimension
divisible by $p^{d_0}2^{d_1}$. Among them, there is a unique maximal
one of codimension $p^{d_0}2^{d_1}$.
\end{proposition}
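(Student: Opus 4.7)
The plan is to transport the strategy of \cite[Section~3]{PS} from Lie algebras to Lie superalgebras, using the imprimitivity theorem (Proposition~\ref{prop:imprimitivity}(ii)) to extract the divisibility once the right structure is in place. Throughout, set $\p=\g_\xi$.

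I would begin with the uniqueness and the dimension of the maximal proper $\g$-invariant ideal. The reduced symmetric superalgebra $\rsa{\xi}{\g}$ is local with unique maximal ideal $\m_\xi$ generated by $\{x-\xi(x)\mid x\in\ev\g\}\cup\od\g$, so every proper $\g$-invariant ideal is contained in $\m_\xi$. The sum of any two proper $\g$-invariant ideals is therefore still proper and still $\g$-invariant, yielding a unique maximal proper $\g$-invariant ideal $J^{\max}$. The quotient $B_0:=\rsa{\xi}{\g}/J^{\max}$ is $\g$-simple and commutative, so by Proposition~\ref{prop:simple g-superalgebra} it is $\g$-equivariantly isomorphic to $\mathscr{F}(\g,\p_0)$ where $\p_0$ is the normalizer of its maximal ideal $\bar\m:=\m_\xi/J^{\max}$. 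The derivation identities $z\cdot(x-\xi(x))=[z,x]$ and $z\cdot y=[z,y]$ for $x\in\ev\g$, $y\in\od\g$ reduce the normalizer condition $z\cdot\m_\xi\subseteq\m_\xi$ to $\xi([z,\g])=0$, so $\p_0=\g_\xi=\p$, and Lemma~\ref{lem:Fpg} gives $\dim B_0=p^{d_0}2^{d_1}$.

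For the divisibility of the codimension of an arbitrary $\g$-invariant ideal $I$, I would equip $\rsa{\xi}{\g}$ with a canonical $(\Fp,\g)$-superalgebra structure, following the super analog of the construction in \cite[Sect.~3]{PS}. Granting this, every $\g$-invariant ideal $I$ is automatically stable under the $\Fp$-action, so the quotient $B:=\rsa{\xi}{\g}/I$ inherits an $(\Fp,\g)$-superalgebra structure. Proposition~\ref{prop:imprimitivity}(ii) then provides a canonical isomorphism
\[
B\;\cong\;\Hom_{\rea{0}{\p}}(\rea{0}{\g},\,B/\m(\g,\p)B)
\]
of $(\Fp,\g)$-superalgebras, and the right-hand side has dimension $\dim\Fp\cdot\dim(B/\m(\g,\p)B)=p^{d_0}2^{d_1}\cdot\dim(B/\m(\g,\p)B)$, which is the required divisibility.

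The main obstacle is the construction of the canonical $(\Fp,\g)$-superalgebra structure on $\rsa{\xi}{\g}$ together with the verification that every $\g$-invariant ideal of $\rsa{\xi}{\g}$ is automatically $\Fp$-stable. In the Lie-algebra setting this is the technical core of \cite[Sect.~3]{PS}; in the super setting the same construction goes through modulo standard Koszul sign bookkeeping, as the odd generators of $\Fp$ play the role of square-zero analogs of the even generators and no essentially new phenomena intervene.
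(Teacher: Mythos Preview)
Your treatment of the maximal $\g$-invariant ideal is correct and matches the paper: both invoke Proposition~\ref{prop:simple g-superalgebra} together with the normalizer computation identifying the stabilizer of $\m_\xi$ with $\g_\xi$.

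For the divisibility assertion, however, the paper cites Proposition~\ref{prop:imprimitivity}(i), not (ii), and this reflects an argument that sidesteps precisely what you flag as ``the main obstacle''. Given a $\g$-invariant ideal $I$, set $B=\rsa{\xi}{\g}/I$ and let $J$ be the unique maximal $\g$-invariant ideal of the commutative $\g$-superalgebra $B$; by Proposition~\ref{prop:simple g-superalgebra} applied to $B$ and the same normalizer computation one gets $B/J\cong\Fp$ with $\p=\g_\xi$. Since $J$ is nilpotent, the filtration $B\supseteq J\supseteq J^{2}\supseteq\cdots$ is finite, and each subquotient $J^{k}/J^{k+1}$ is an $(\Fp,\rea{0}{\g})$-module: the $\Fp$-action is multiplication by $B$ (which factors through $B/J\cong\Fp$), and compatibility with the derivation $\g$-action is just the Leibniz rule. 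Part~(i) then gives $p^{d_0}2^{d_1}\mid\dim(J^{k}/J^{k+1})$ for every $k$, whence $p^{d_0}2^{d_1}\mid\codim I$. No $(\Fp,\g)$-superalgebra structure on $\rsa{\xi}{\g}$ itself is ever needed.

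Your route via (ii) is not wrong in principle, but its premise---a $\g$-equivariant superalgebra map $\Fp\to\rsa{\xi}{\g}$---is neither constructed in \cite[Sect.~3]{PS} nor obvious to produce directly (one has a canonical surjection $\rsa{\xi}{\g}\twoheadrightarrow\Fp$, not a section). So the deferral to \cite{PS} at that point does not land on an existing argument; you would have to supply that construction yourself, whereas the filtration approach above uses only what the paper already provides.
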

\begin{proof}
The proof, which uses Propositions~\ref{prop:simple g-superalgebra}
and~\ref{prop:imprimitivity}(i), is similar to proof of
\cite[Thm~3.4]{PS}, and will be skipped here.
\end{proof}

%
\begin{remark}\label{rem:PS}
Let $\mathscr L$ be an $n$-dimensional restricted Lie algebra, and
let $r$ be the minimal dimension of the centralizers of all $\chi
\in \mathscr{L}^*$. It is conjectured by Kac-Weisfeiler that the
maximal dimension $M(\mathscr{L})$ of simple $\mathscr L$-modules is
$p^{\frac{n-r}{2}}$. Following \cite{PS}, we refer to this
conjecture as KW1 conjecture, which is still open.

In \cite{PS}, Premet and Skryabin showed that
\begin{itemize}
\item[(\dag)] the set of $\chi \in \mathscr{L}^*$ such that
$\rea{\chi}{\mathscr{L}}$ has all its simple modules having the
maximal dimension $M(\mathscr{L})$ is nonempty and Zariski open in
$\mathscr{L}^*$.
\end{itemize}
Using deformation arguments, they then showed that
\begin{itemize}
\item[(\ddag)] if there is $\chi \in \mathscr{L}^*$ whose centralizer is a toral
subalgebra of $\mathscr L$, then there is a nonempty and Zariski
open subset $W$ of $\mathscr{L}^*$ such that $\xi \in W$ implies
that all simple $\rea{\xi}{\mathscr{L}}$-modules have dimension
$p^{\frac{n-r}{2}}$.
\end{itemize}
This, together with $(\dag)$, confirms KW1 conjecture for such
$\mathscr{L}$.

Along the line in this section, we can establish the corresponding
statement of $(\ddag)$ in the superalgebra setting. However, it is
not clear how to generalize $(\dag)$ to a general restricted Lie
superalgebra. This is mainly due to the fact that the universal
enveloping superalgebra is in general not a prime ring (see \cite{B}
for a counterexample over the complex numbers), which is crucial in
the proof of $(\dag)$ in \cite{PS}.
\end{remark}

\section{Proof of super KW property for basic classical Lie
superalgebras}\label{sec:SKW}
\subsection{} In this subsection we first recall some basic facts on the method of
associated cones, following \cite[Sect.~5.1]{PS}.

Let $V$ be a finite dimensional vector space over $K$. For an ideal
$I$ of the symmetric algebra $S(V^*)$, let $\text{gr}I$ denote the
homogeneous ideal of $S(V^*)$ with the property that $g \in
\text{gr}I \cap S^r(V^*)$ if and only if there is $\tilde{g} \in I$
such that
\[
\tilde{g} - g \in \oplus_{j<r}S^j(V^*).
\]
Identify $S(V^*)$ with the algebra of polynomial functions on $V$.
Given a subset $X \subseteq V$, let
\[
I_X = \{ g \in S(V^*) \vert \; g(X) = 0\}
\]
be the ideal associated to it. The set
\[
\mathbb{K}X :=\{ v\in V \vert \; f(v)=0 \text{ for all } f\in
\text{gr}I_X\}.
\]
is called the {\em cone associated with} $X$. It is a Zariski closed
conical subset of $V$. We identify $V$ (resp. $\mathbb{P}(V)$) with
the subset of $\mathbb{P}(V\oplus K)$ consisting of all points
$(v:1)$ (resp., $(v:0)$) with $v \in V$ (resp. $v\in
V\setminus\{0\}$). Let $\overline{X}^P$ (resp. $\overline{X}$)
denote the Zariski closure of $X$ in $\mathbb{P}(V\oplus K)$ (resp.,
in $V$). The following facts are easy to prove
\begin{equation}\label{equ:closures}
\overline{X}^P \cap V = \overline{X} \text{  and  }\overline{X}^P
\cap \mathbb{P}(V) =\mathbb{P}(\mathbb{K}X),
\end{equation}
where $\mathbb{P}(\mathbb{K}X) \subseteq \mathbb{P}(V)$ denotes the
projectivisation of the conical subset $\mathbb{K}X$.

\subsection{} Now let $\g$ be a restricted Lie superalgebra. For a pair of nonnegative integers
$(d_0|\, d_1)$ with $d_0$ even, let $\mathscr{X}_{d_0, d_1}$ denote
the set of all $\xi \in \ev \g^*$ such that the algebra
$\rea{\xi}{\g}$ has a module of finite dimension not divisible by
$p^{\frac{d_0}{2}}2^{\lfloor \frac{d_1}{2}\rfloor}$, and let
$\mathscr{X}'_{d_0,d_1} \subseteq \mathbb{P}(\ev \g^* \oplus K)$ be
the subset of all points $(\xi: \lambda)$ satisfying
$\raa{\xi}{\lambda}{\g}$ has a $\g$-invariant ideal of codimension
not divisible by $p^{d_0}2^{d_1}$. Set
\[
\mathscr{Y}_{d_0, d_1} = \{ \xi \in \ev \g^* \vert \; \codim_{\ev
\g}\g_{\xi,\bar{0}}< d_0 \text{ or }\codim_{\od \g}\g_{\xi,\bar{1}}<
d_1 \}.
\]
Note $\mathscr{X}_{d_0,2k+1} = \mathscr{X}_{d_0,2k+2}$, but this is
not the case for $\mathscr{X}'_{d_0,d_1}$ and $\mathscr{Y}_{d_0,
d_1}$.

By Lemma~\ref{lem:dim-closed}, $\mathscr{X}'_{d_0,d_1}$ is closed.
The set $\mathscr{Y}_{d_0,d_1}$ is obviously conical, and let
$\mathbb{P}(\mathscr{Y}_{d_0,d_1}) \subseteq \mathbb{P}(\ev \g^*)$
be its projectivization. By Proposition~\ref{prop:sym}, $\eta \in
\ev \g^*$ lies in $\mathscr{Y}_{d_0,d_1}$ if and only if
$\rsa{\eta}{\g}$ has a $\g$-invariant ideal with codimension not
divisible by $p^{d_0}2^{d_1}$. Therefore,
\begin{equation}\label{equ:Y}
\mathscr{X}'_{d_0,d_1} \cap \mathbb{P}(\ev
\g^*)=\mathbb{P}(\mathscr{Y}_{d_0,d_1}).
\end{equation}
Hence $\mathbb{P}(\mathscr{Y}_{d_0,d_1})$ is closed in
$\mathbb{P}(\ev \g^*)$, and so $\mathscr{Y}_{d_0,d_1}$ is Zariski
closed in $\ev \g^*$.

\begin{proposition}\label{prop:XY}
We have $\mathbb{K}\mathscr{X}_{d_0, d_1} \subseteq
\mathscr{Y}_{d_0, d_1}$ for any pair of nonnegative integers $(d_0|\,d_1)$ with $d_0$ even. 
\end{proposition}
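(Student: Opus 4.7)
The plan is to relate dimensions of modules over $\rea{\xi}{\g}$ (which control $\X{d_0}{d_1}$) to codimensions of $\g$-invariant ideals in $\raa{\xi}{1}{\g}$ (which control $\Xp{d_0}{d_1}$), then take Zariski closures in $\mathbb{P}(\ev\g^*\oplus K)$ and apply the associated-cone identities from (\ref{equ:closures}) together with (\ref{equ:Y}).

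First I would prove the affine-level inclusion $\X{d_0}{d_1}\subseteq \Xp{d_0}{d_1}$, where $\xi$ is identified with $(\xi:1)$. If $\rea{\xi}{\g}$ admits a module $M$ of dimension not divisible by $p^{d_0/2}2^{\lfloor d_1/2\rfloor}$, then by additivity of dimension along a composition series some simple constituent $L$, of dimension $d$, has the same divisibility failure. The annihilator $I=\text{Ann}_{\rea{\xi}{\g}}(L)$ is a two-sided ideal of $\rea{\xi}{\g}$, and it is $\g$-invariant because $\g$ acts on $\raa{\xi}{1}{\g}=\rea{\xi}{\g}$ by inner superderivations through the canonical map $\g\to\rea{\xi}{\g}$. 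The quotient $\rea{\xi}{\g}/I$ is a finite dimensional simple associative superalgebra acting faithfully on the graded-simple $L$, so Wall's classification \cite{W} forces it to be either of type $M$, with $\dim_K(\rea{\xi}{\g}/I)=d^2$, or of type $Q$, with $\dim_K(\rea{\xi}{\g}/I)=d^2/2$. Writing $d=p^a2^bm$ with $\gcd(m,2p)=1$, the hypothesis on $d$ forces $a<d_0/2$ or $b<\lfloor d_1/2\rfloor$, and a short check of $p$-adic and $2$-adic valuations shows that neither $d^2$ nor $d^2/2$ is divisible by $p^{d_0}2^{d_1}$. Hence $(\xi:1)\in \Xp{d_0}{d_1}$.

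Next, by Lemma~\ref{lem:dim-closed} the set $\Xp{d_0}{d_1}$ is Zariski closed in $\mathbb{P}(\ev\g^*\oplus K)$, so the projective closure $\overline{\X{d_0}{d_1}}^P$ is also contained in $\Xp{d_0}{d_1}$. Intersecting with $\mathbb{P}(\ev\g^*)$ and using (\ref{equ:closures}) on the left and (\ref{equ:Y}) on the right gives
\[
\mathbb{P}(\mathbb{K}\X{d_0}{d_1})=\overline{\X{d_0}{d_1}}^P\cap\mathbb{P}(\ev\g^*)\subseteq \Xp{d_0}{d_1}\cap\mathbb{P}(\ev\g^*)=\mathbb{P}(\Y{d_0}{d_1}).
\]
Both $\mathbb{K}\X{d_0}{d_1}$ and $\Y{d_0}{d_1}$ are conical subsets of $\ev\g^*$, so this projective inclusion lifts to $\mathbb{K}\X{d_0}{d_1}\subseteq \Y{d_0}{d_1}$: when $d_0+d_1>0$ the origin belongs to both cones, and the case $d_0=d_1=0$ is vacuous since then $\X{d_0}{d_1}$ is empty.

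The hard part will be the first step, which is the genuinely super-algebraic input. In the ordinary Lie algebra setting of \cite{PS}, Wedderburn forces every finite-dimensional simple associative quotient to be a matrix algebra, so the codimension of the annihilator of a simple module is uniformly $d^2$ and the divisibility translation is cosmetic. In the super setting, type-$Q$ shaves off a factor of $2$ and leaves $d^2/2$; the exponent $\lfloor d_1/2\rfloor$ in the Super KW statement (with the paper's convention for $\lfloor\cdot\rfloor$) is tailored exactly to absorb this loss, and confirming that the $2$-adic valuation indeed works out in both Wall types is the only substantive arithmetic in the argument.
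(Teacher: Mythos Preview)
Your proof is correct and follows essentially the same route as the paper: establish the affine inclusion $\X{d_0}{d_1}\subseteq\Xp{d_0}{d_1}$ via Wall's classification of simple superalgebras and a valuation count, then close up in $\mathbb{P}(\ev\g^*\oplus K)$ using Lemma~\ref{lem:dim-closed} and read off the cone inclusion from (\ref{equ:closures}) and (\ref{equ:Y}). The only cosmetic difference is that you argue the affine step directly while the paper runs the contrapositive; you also make the cone-lifting step (handling the origin and the degenerate case $d_0=d_1=0$) explicit where the paper leaves it implicit.
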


\begin{proof}
We claim that $\mathscr{X}_{d_0,d_1}\subseteq \mathscr{X}'_{d_0,d_1}
\cap \ev \g^*$. Indeed, suppose $\xi \in \ev\g^* \setminus
(\mathscr{X}'_{d_0,d_1} \cap \ev \g^*)$. Then each two-sided ideal
of $\rea{\xi}{\g}$ is of codimension divisible by $p^{d_0}2^{d_1}$
since all the two-sided ideals of $\rea{\xi}{\g}$ are
$\g$-invariant. Let $V$ be a simple module of $\rea{\xi}{\g}$ and
let $J=\text{Ann}_{\rea{\xi}{\g}}V$ be its annihilator in
$\rea{\xi}{\g}$. Then by \cite[Section~2]{LBF}$, \rea{\xi}{\g}/J$ is
a simple superalgebra over $K$ with the unique simple module $V$
since $K$ is algebraically closed. Then either (1) $\rea{\xi}{\g}/J$
is of type $M$ with dimension $a^2$ for some natural number $a$; or
it is of type $Q$ with dimension $2b^2$ for some natural number $b$.
In case (1), the dimension of $V$ is $a$. Then since
$p^{d_0}2^{d_1}$ divides $a^2=\dim\rea{\xi}{\g}/J$, we will have
$p^{\frac{d_0}{2}}2^{\lfloor \frac{d_1}{2}\rfloor}$ divides $a$. In
case (2), the dimension of $V$ is $2b$. Since $p^{d_0}2^{d_1}$
divides $2b^2=\dim\rea{\xi}{\g}/J$, $p^{\frac{d_0}{2}}2^{\lceil
\frac{d_1}{2}\rceil}$ divides $b$. In either case, the
$\rea{\xi}{\g}$-module $V$ has dimension divisible by
$p^{\frac{d_0}{2}}2^{\lfloor \frac{d_1}{2}\rfloor}$, which implies
$\xi \notin \mathscr{X}_{d_0,d_1}$. The claim is proved.

The claim implies that $\overline{\mathscr{X}}_{d_0,d_1}^P \subseteq
\mathscr{X}'_{d_0,d_1}$, since $\mathscr{X}'_{d_0,d_1}$ is closed by
Lemma~\ref{lem:dim-closed}. Then we have
$\overline{\mathscr{X}}_{d_0,d_1}^P \cap \mathbb{P}(\ev \g^*)
\subseteq \mathscr{X}'_{d_0,d_1}\cap \mathbb{P}(\ev \g^*)$, this
means $\mathbb{P}(\mathbb{K}\mathscr{X}_{d_0,d_1}) \subseteq
\mathbb{P}(\mathscr{Y}_{d_0,d_1})$ by (\ref{equ:closures}) and
(\ref{equ:Y}). But since both $\mathbb{K}\mathscr{X}_{d_0,d_1}$ and
$\mathscr{Y}_{d_0,d_1}$ are conical, we deduce that
$\mathbb{K}\mathscr{X}_{d_0,d_1} \subseteq \mathscr{Y}_{d_0,d_1}$,
as desired.
\end{proof}

\subsection{} Let $G(\ev \g)$ denote the group of all automorphisms
of $\ev \g$ preserving the $[p]$th power map, i.e., automorphisms
$g$ satisfying $g(\pth x)=\pth{g(x)}$ for all $x \in \ev \g$. Let
$\Omega(\eta)$ denote the $G(\ev \g)$-orbit of $\eta \in \ev \g^*$.

For $\chi \in \ev \g^*$, define
\begin{align*}
l_0(\chi) = \min_{\xi \in \mathbb{K}\Omega(\chi)}\dim \g_{\xi,\bar{0}},\\
l_1(\chi) = \min_{\xi \in \mathbb{K}\Omega(\chi)}\dim
\g_{\xi,\bar{1}}.
\end{align*}

\begin{theorem}\label{thm-div}
Let $\g$ be an $(n_0|\,n_1)$-dimensional restricted Lie
superalgebra, and $\chi \in \ev \g^*$. Write $l_i=l_i(\chi)$ for $i
\in \Z_2$, and $d_0|\,d_1=\scodim_{\g}\g_{\chi}$. Then,
\begin{itemize}
\item[(i)] Each finite dimensional $\rea{\chi}{\g}$-module has
dimension divisible by
$p^{\frac{n_0-l_0}{2}}2^{\lfloor\frac{n_1-l_1}{2}\rfloor}$.
%

\item[(ii)] If all nonzero scalar multiples of $\chi$ are $G(\ev
\g)$-conjugate, then the dimensions of all finite dimensional
$\rea{\chi}{\g}$-modules are divisible by
$p^{\frac{d_0}{2}}2^{\lfloor \frac{d_1}{2}\rfloor}$, i.e. the super
KW conjecture holds for the algebra $\rea{\chi}{\g}$.
\end{itemize}
\end{theorem}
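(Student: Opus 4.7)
The plan is to derive both parts from Proposition~\ref{prop:XY} via the associated-cone formalism, using the $G(\ev\g)$-orbit of $\chi$; (ii) then falls out of (i) after a short closure calculation. Because $p$ is odd, the divisor in (i) factors into coprime pieces, so for (i) it suffices to show separately that every finite-dimensional $\rea{\chi}{\g}$-module has dimension divisible by $p^{(n_0-l_0)/2}$ and by $2^{\lfloor(n_1-l_1)/2\rfloor}$.

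For the $p$-part I argue by contradiction, assuming $\chi\in\mathscr{X}_{n_0-l_0,\,0}$. Using that each $g\in G(\ev\g)$ lifts to an automorphism of $\g$ and thereby induces a superalgebra isomorphism $\rea{\chi}{\g}\cong\rea{g\chi}{\g}$, the set $\mathscr{X}_{n_0-l_0,\,0}$ is $G(\ev\g)$-stable, so $\Omega(\chi)\subseteq\mathscr{X}_{n_0-l_0,\,0}$. Combining the closedness of $\mathscr{X}'_{n_0-l_0,\,0}$ from Lemma~\ref{lem:dim-closed}, the closure identities (\ref{equ:closures}) and (\ref{equ:Y}), and Proposition~\ref{prop:XY}, I obtain
\[
\mathbb{K}\Omega(\chi)\subseteq\mathbb{K}\mathscr{X}_{n_0-l_0,\,0}\subseteq\mathscr{Y}_{n_0-l_0,\,0}=\{\xi\in\ev\g^{*}:\dim\g_{\xi,\bar 0}>l_0\},
\]
contradicting the fact that $l_0=l_0(\chi)$ is attained as a minimum on $\mathbb{K}\Omega(\chi)$. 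Running the same argument with $(d_0,d_1)=(0,\,n_1-l_1)$ in place of $(n_0-l_0,\,0)$ gives the $2$-part; note that $n_0-l_0$ is automatically even since the rank of an alternating form is even.

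For (ii), in view of (i) it suffices to prove $l_i(\chi)\leq\dim\g_{\chi,\bar i}$ for both parities $i\in\Z_2$, i.e.\ that $\chi$ itself lies in $\mathbb{K}\Omega(\chi)$. The hypothesis $(\star)$ gives $K^\times\chi\subseteq\Omega(\chi)$, and the projective closure of $\{(t\chi:1):t\in K^\times\}\subseteq\Pro(\ev\g^{*}\oplus K)$ meets $\Pro(\ev\g^{*})$ precisely at $(\chi:0)$, so (\ref{equ:closures}) forces $\chi\in\mathbb{K}(K^\times\chi)\subseteq\mathbb{K}\Omega(\chi)$. The step demanding the most care, and where I expect the real obstacle, is the $G(\ev\g)$-invariance of $\mathscr{X}_{d_0,d_1}$ exploited in (i); once the lift of $G(\ev\g)$-automorphisms to $\g$ is pinned down, the rest is a direct application of the invariant-ideal machinery from Section~\ref{sec:basic-family}.
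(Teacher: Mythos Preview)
Your overall strategy is exactly the paper's: for (i) you argue by contradiction from $\chi\in\mathscr{X}_{d_0,d_1}$, use $G(\ev\g)$-stability to get $\Omega(\chi)\subseteq\mathscr{X}_{d_0,d_1}$, pass to associated cones, and invoke Proposition~\ref{prop:XY} to land in $\mathscr{Y}_{d_0,d_1}$, contradicting the definition of $l_i$; for (ii) you show $\chi\in\mathbb{K}\Omega(\chi)$ via the projective closure of $K^\times\chi$ and reduce to (i). This matches the paper line for line.

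The gap is precisely where you suspect it, but your proposed fix---that each $g\in G(\ev\g)$ lifts to an automorphism of $\g$---is false in general. Take $\ev\g=Kh$ with $h^{[p]}=h$ and $\od\g=Ky$ with $[h,y]=y$, $[y,y]=0$; then $h\mapsto ch$ with $c\in\mathbb{F}_p^\times$, $c\neq 1$, lies in $G(\ev\g)$, yet any extension to $\g$ would have to send $y\mapsto dy$ with $cd=d$, forcing $c=1$. The paper does \emph{not} argue via such a lift; it simply asserts that $\rea{g(\chi)}{\g}$ and $\rea{\chi}{\g}$ are isomorphic for $g\in G(\ev\g)$, without further comment. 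So your proof and the paper's diverge at exactly this line: you supply a reason that does not hold, while the paper records the isomorphism as a bare claim. For the intended applications (Theorem~\ref{thm:SKW-nilpotent} and its queer analogue) the group actually used is $\Ad G_{\bar 0}\subseteq G(\ev\g)$, which \emph{does} act on all of $\g$, so there the lift exists and the difficulty evaporates. If you want the statement in the generality written, either replace $G(\ev\g)$ throughout by the group of restricted automorphisms of $\g$ itself (this suffices for the applications since $\Ad G_{\bar 0}$ lands there), or produce a direct argument for the isomorphism $\rea{g\chi}{\g}\cong\rea{\chi}{\g}$ that does not go through a lift.
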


\begin{proof}
To prove part (i), we treat the p- and 2-divisibility separately.
Suppose that $\rea{\xi}{\g}$ has a finite dimensional module $V$
such that $\dim V$ is not divisible by $2^{\lfloor
\frac{n_1-l_1}{2}\rfloor}$. Then $\chi \in \X{0}{n_1-l_1}$ by the
definition of $\X{0}{n_1-l_1}$. (Note that in addition, $\chi \in
\X{0}{n_1-l_1+1}$ when $n_1-l_1$ is odd, while $\chi \in
\X{0}{n_1-l_1-1}$ when $n_1-l_1$ is even. But we do not need this.)
Since for any $g \in G(\ev \g)$, the algebras $\rea{g(\chi)}{\g}$
and $\rea{\chi}{\g}$ are isomorphic. It follows that $\Omega(\chi)
\subseteq \X{0}{n_1-l_1}$. But then $\mathbb{K}\Omega(\chi)
\subseteq \mathbb{K}\X{0}{n_1-l_1}$. As $\mathbb{K}\X{0}{n_1-l_1}
\subseteq \Y{0}{n_1-l_1}$ by Proposition~\ref{prop:XY}, we have
\[
\text{codim}_{\od \g}\g_{\xi, \bar{1}} < n_1-l_1
\]
for any $\xi \in \mathbb{K}\Omega(\chi)$, which contradicts the
choice of $l_1$.

The $p$-divisibility can be proved similarly.

For part (ii), note first that $(\chi:0) \in
\overline{K^\times\chi}^P$. Since by assumption that $K^\times\chi$
is contained in a single $G(\ev\g^*)$-orbit, we have, by
equation~(\ref{equ:closures}),
\[
(\chi:0) \in \overline{K^\times\chi}^P \cap
\mathbb{P}(\ev\g^*)\subseteq \overline{\Omega(\chi)}^P \cap
\mathbb{P}(\ev\g^*)=\mathbb{P}(\mathbb{K}\Omega(\chi)).
\]
Thus $\chi \in \mathbb{K}\Omega(\chi)$, and as a result, $l_i \leq
n_i-d_i$ for $i \in \Z_2$. From here, (ii) follows from (i).
\end{proof}

\subsection{} Now let $\g$ be one of the basic
classical Lie superalgebras as in Section~\ref{subsec:bcls}. Recall
that the even subalgebra $\ev \g$ is the Lie algebra of a reductive
group $G_{\bar{0}}$, and that $\g$ admits an even nondegenerate
$G_{\bar{0}}$-invariant bilinear form. Given the bilinear form, we
can speak of nilpotent $p$-characters, i.e. those which correspond
to nilpotent elements in $\ev \g$ under the isomorphism $\ev \g
\cong \ev \g^*$ induced by the bilinear form.

We are now ready to give an alternative proof of
\cite[Theorem.~4.3]{WZ1}.

\begin{theorem}\label{thm:SKW-nilpotent}
Let $\g$ be as in Section~\ref{subsec:bcls}, and let $\chi \in \ev
\g^*$ be nilpotent. Write $d_0 \vert\, d_1 = \scodim\g_{\chi}$. Then
the dimension of every finite dimensional $\rea{\chi}{\g}$-module
$V$ is divisible by $p^{\frac{d_0}{2}}2^{\lfloor
\frac{d_1}{2}\rfloor}$.
\end{theorem}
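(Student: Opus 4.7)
The plan is to reduce everything to Theorem~\ref{thm-div}(ii) by verifying the hypothesis $(\star)$: that all nonzero scalar multiples of $\chi$ are conjugate under the group $G(\ev\g)$ of restricted automorphisms of $\ev\g$. Once $(\star)$ holds, part (ii) of the theorem immediately gives the desired divisibility by $p^{d_0/2}2^{\lfloor d_1/2\rfloor}$, and the proof is complete.

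To set up $(\star)$, I would first use the even nondegenerate $G_{\bar 0}$-invariant bilinear form on $\g$ to identify $\ev\g \cong \ev\g^*$ as $G_{\bar 0}$-modules, so that the nilpotent element $e \in \ev\g$ corresponding to $\chi$ is well defined and the $G_{\bar 0}$-action on $\chi$ is transported to the adjoint action on $e$. Since $G_{\bar 0}$ acts on $\ev\g$ by restricted Lie algebra automorphisms, one has $G_{\bar 0} \subseteq G(\ev\g)$, and so it suffices to show the $G_{\bar 0}$-orbit of $e$ in $\ev\g$ is stable under multiplication by $K^\times$.

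Next, I would invoke the theory of associated cocharacters for nilpotent elements in reductive Lie algebras. The restrictions on $p$ listed in the table of Section~\ref{subsec:bcls} were chosen precisely to ensure $p$ is good for $\ev\g$; with $p$ good, the standard result (see \cite[Sections~2.8, 2.10]{Jan2}) produces a cocharacter $\la : \mathbb{G}_m \ra G_{\bar 0}$ with
\[
\Ad(\la(t)) e = t^2 e \qquad \text{for all } t \in K^\times.
\]
Because $K$ is algebraically closed and $p>2$, every element of $K^\times$ is a square, hence $se \in G_{\bar 0}\cdot e$ for every $s \in K^\times$. Transporting back through the bilinear form identification, every nonzero scalar multiple of $\chi$ is $G_{\bar 0}$-conjugate, hence $G(\ev\g)$-conjugate, to $\chi$; this is exactly $(\star)$.

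The main obstacle is not the deformation machinery --- that work has been done in Theorem~\ref{thm-div} --- but the verification that the cocharacter argument really is available for every entry of the table in Section~\ref{subsec:bcls}. This forces one to confirm case-by-case that the listed bounds on $p$ imply $p$ is good for the reductive Lie algebra $\ev\g$, a point already recorded in Section~\ref{subsec:bcls}. With that in hand, the proof is a clean concatenation: good prime $\Rightarrow$ associated cocharacter $\Rightarrow$ $(\star)$ $\Rightarrow$ Theorem~\ref{thm-div}(ii) $\Rightarrow$ the claimed divisibility.
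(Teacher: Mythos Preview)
Your proposal is correct and follows essentially the same route as the paper: verify condition $(\star)$ for nilpotent $\chi$ by using the $G_{\bar 0}$-equivariant identification $\ev\g\cong\ev\g^*$ together with the fact (from \cite[Sections~2.8, 2.10]{Jan2}, valid because $p$ is good for $\ev\g$) that nonzero scalar multiples of a nilpotent element lie in a single $G_{\bar 0}$-orbit, then apply Theorem~\ref{thm-div}(ii). The paper simply cites Jantzen's lemma rather than spelling out the cocharacter mechanism, but the argument is the same.
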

\begin{proof}
By \cite[Theorem~2.8.1]{Jan2}, $\ev G$ has finitely many orbits in
$\ev\g$. Thus $\ev G$ has finitely many coadjoint orbits in
$\ev\g^*$ via the $\ev G$-equivariant isomorphism $\ev \g\cong
\ev\g^*$. If $\chi \in \ev \g^*$ is nilpotent, so is $K^\times\chi$.
Then by \cite[Lemma~2.10]{Jan2}, $K^\times\chi$ is contained in the
$G_{\bar{0}}$-orbit of $\chi$. Now since $\Ad G_{\bar{0}} \subseteq
G(\ev \g)$, we have $K^\times\chi \subseteq G_{\bar{0}}\cdot\chi
\subseteq \Omega(\chi)$. Hence by Theorem~\ref{thm-div} (ii), the
dimension of every finite dimensional $\rea{\chi}{\g}$-module $V$ is
divisible by $p^{\frac{d_0}{2}}2^{\lfloor \frac{d_1}{2}\rfloor}$,
i.e., the super KW conjecture holds for $\rea{\chi}{\g}$.
\end{proof}

\begin{remark}
In a similar fashion as in Theorem~\ref{thm:SKW-nilpotent}, we can
use Theorem~\ref{thm-div} to give an alternative proof of super KW
conjecture for the queer Lie superalgebra with nilpotent
$p$-characters (\cite[Theorem~4.4]{WZ2}).
\end{remark}

%
%

Now together with \cite[Remarks~2.5 and 4.6, Theorem~5.2]{WZ1}, we
have strengthened the super KW property for basic classical Lie
superalgebras as follows. We remark here that
\cite[Theorem~5.2]{WZ1} remains valid for basic classical Lie
superalgebras with assumption on $p$ as in
Section~\ref{subsec:bcls}.

\begin{theorem}[Super Kac-Weisfeiler Conjecture] \label{th:KW}
Let $\g$ be a basic classical Lie superalgebra as in
Section~\ref{subsec:bcls}, and let $\chi \in \ev \g^*$. Let
$d_0|\,d_1=\scodim \g_\chi$. Then the dimension of every
$\rea{\chi}{\g}$-module $M$ is divisible by
$p^{\frac{d_0}{2}}2^{\lfloor \frac{d_1}{2} \rfloor}$.
\end{theorem}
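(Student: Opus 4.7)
The plan is to reduce the general statement to the nilpotent case already established in Theorem~\ref{thm:SKW-nilpotent} via a Jordan--Morita argument drawn from \cite{WZ1}. Fix $\chi \in \ev\g^*$. Using the even nondegenerate $G_{\bar 0}$-invariant bilinear form on $\g$ to identify $\ev\g \cong \ev\g^*$, $\chi$ corresponds to an element $x \in \ev\g$. Since $\ev\g$ is the Lie algebra of a reductive group and $p$ is good for $\ev\g$ under the hypotheses of Section~\ref{subsec:bcls}, $x$ admits a restricted Jordan decomposition $x = x_s + x_n$, giving a corresponding decomposition $\chi = \chi_s + \chi_n$ with $\chi_s$ semisimple and $\chi_n$ nilpotent. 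The centralizer $\g_{\chi_s}$ is again a restricted Lie superalgebra whose even part is a Levi subalgebra of $\ev\g$, while $\chi_n$ restricts to a nilpotent linear form on $\ev\g_{\chi_s}$; moreover $(\g_{\chi_s})_{\chi_n} = \g_{\chi}$. This step is made precise in \cite[Remark~2.5]{WZ1}.

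Next, I invoke the Morita equivalence of \cite[Theorem~5.2]{WZ1}, which asserts that $\rea{\chi}{\g}$ is Morita equivalent to $\rea{\chi_n}{\g_{\chi_s}}$, together with the dimension-counting relation of \cite[Remark~4.6]{WZ1}: the dimension of a simple $\rea{\chi}{\g}$-module equals the dimension of the corresponding simple $\rea{\chi_n}{\g_{\chi_s}}$-module multiplied by a factor of the form $p^{a_0/2}\cdot 2^{\lfloor a_1/2 \rfloor}$, where $a_0|\,a_1 = \scodim_{\g}\g_{\chi_s}$.

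To conclude, I apply Theorem~\ref{thm:SKW-nilpotent} (equivalently Theorem~\ref{thm-div}(ii)) to the nilpotent $p$-character $\chi_n$ on $\g_{\chi_s}$, obtaining divisibility of simple $\rea{\chi_n}{\g_{\chi_s}}$-module dimensions by $p^{d_0'/2}\cdot 2^{\lfloor d_1'/2 \rfloor}$, where $d_0'|\,d_1' = \scodim_{\g_{\chi_s}}(\g_{\chi_s})_{\chi_n}$. Since $(\g_{\chi_s})_{\chi_n} = \g_{\chi}$, we have $d_0 = a_0 + d_0'$ and $d_1 = a_1 + d_1'$. Multiplying the two divisibilities yields the claim for every simple module, and hence (by d\'evissage through composition series) for every finite-dimensional $\rea{\chi}{\g}$-module.

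The main obstacle I foresee is verifying that Theorem~\ref{thm-div}(ii) genuinely applies to the pair $(\g_{\chi_s}, \chi_n)$: since $\g_{\chi_s}$ need not itself appear in the table of Section~\ref{subsec:bcls}, one must check condition $(\star)$ directly. This amounts to observing that the reductive even part $\ev\g_{\chi_s}$ has finitely many coadjoint nilpotent orbits (cf.\ \cite[Theorem~2.8.1]{Jan2}) and then invoking \cite[Lemma~2.10]{Jan2} to conclude that $K^\times\chi_n$ lies in a single $G(\ev\g_{\chi_s})$-orbit, exactly as in the proof of Theorem~\ref{thm:SKW-nilpotent}. A secondary subtlety is the elementary parity bookkeeping $\lfloor a_1/2\rfloor + \lfloor d_1'/2 \rfloor \geq \lfloor (a_1+d_1')/2\rfloor$ required to combine the two $2$-power factors across the Morita reduction; this is handled by a case split on the parities of $a_1$ and $d_1'$.
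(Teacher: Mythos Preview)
Your approach is exactly the paper's: combine Theorem~\ref{thm:SKW-nilpotent} with the Jordan decomposition and the Morita equivalence of \cite[Theorem~5.2]{WZ1} (together with \cite[Remarks~2.5 and 4.6]{WZ1}) to pass from arbitrary $\chi$ to nilpotent $\chi_n$ on $\g_{\chi_s}$.  Your observation that $\g_{\chi_s}$ need not appear in the table of Section~\ref{subsec:bcls}, so that condition~$(\star)$ must be checked directly via \cite[Lemma~2.10]{Jan2}, is well taken and is precisely what the paper is implicitly using.

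One point needs correction.  The inequality you write, $\lfloor a_1/2\rfloor + \lfloor d_1'/2 \rfloor \geq \lfloor (a_1+d_1')/2\rfloor$, is \emph{false} in general: if $a_1=d_1'=1$ the left side is $0$ and the right side is $1$.  A bare ``case split on the parities'' will not close this gap.  What actually rescues the argument is that $a_1$ is always even: since $\chi_s$ is semisimple, $\g_{\chi_s}$ is the sum of $\h$ and those root spaces $\g_\alpha$ with $\alpha(H_{\chi_s})=0$, and the odd root spaces outside $\g_{\chi_s}$ occur in $\pm\alpha$ pairs.  With $a_1$ even one has $\lfloor a_1/2\rfloor + \lfloor d_1'/2 \rfloor = a_1/2 + \lfloor d_1'/2\rfloor = \lfloor (a_1+d_1')/2\rfloor$, and the two $2$-power factors combine as required.
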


\section{Semisimple $p$-characters for basic classical Lie
superalgebras}\label{sec:ss} Now we turn our attention to basic
classical Lie superalgebras $\g$ (Sect.~\ref{subsec:bcls}) with a
semisimple $p$-character $\chi \in \ev\g^*$ (see below for a
definition). Our purpose is to give a semisimplicity criterion for
the reduced enveloping superalgebra $\rea{\chi}{\g}$.

Let $\g$ be one of the basic classical Lie superalgebras as in
Sect.~\ref{subsec:bcls}. Fix a Cartan subalgebra $\h$ of $\g$ (and
of $\ev \g$). It defines the set of roots $\Delta= \ev \Delta \cup
\od \Delta$, where $\ev \Delta$ (resp. $\od \Delta$) is the set of
even (resp. odd) roots. Let $W$ be the Weyl group of $\ev\g$. The
$\ev G$-invariant bilinear form on $\g$ induces a $W$-invariant
bilinear form $(.,.)$ on $\h^*$. Put
\begin{align*}
\ev{\overline \Delta} & =\{\alpha \in \ev \Delta |\; \frac{1}{2}
\alpha \notin \od \Delta\};\\
\od{\overline \Delta} & =\{\alpha \in \od \Delta |\; 2 \alpha \notin
\ev \Delta\}=\{\alpha \in \od\Delta|\; (\alpha,\alpha)=0\}.
\end{align*}
For $\alpha \in \Delta$, let $H_\alpha$ and $X_\alpha$ be a choice
of coroot and root vectors respectively.

Let $\chi \in \ev \g^*$ be a $p$-character satisfying
$\chi(X_\alpha)=0$ for all $\alpha \in \ev \Delta$. A $p$-character
which is $\ev G$-conjugate to one of such $\chi$ is called {\em semisimple}. 

\subsection{} Fix an arbitrary set of
simple roots $\Pi$ of $\Delta$. It determines a set of positive
roots ${}^\Pi\Delta^+$. Denote by ${}^\Pi\ev{\Delta}^+$,
${}^\Pi\od{\Delta}^+$, ${}^\Pi\ev{\overline{\Delta}}^+$, and
${}^\Pi\od{\overline{\Delta}}^+$ the subsets of positive roots in
the sets $\ev \Delta$, $\od \Delta$, etc. respectively. Let
\[
\g = {}^\Pi\n^- \oplus \h \oplus {}^\Pi\n^+
\]
be the corresponding triangular decomposition. Put
${}^\Pi\mathfrak{b}= \h \oplus {}^\Pi\n^+$. Let
${}^\Pi\rho={}^\Pi\ev \rho - {}^\Pi\od \rho$, where ${}^\Pi\ev \rho$
(resp. ${}^\Pi\od \rho$) is the half sum of positive even (resp.
odd) roots.

For $\la \in \La_\chi:=\{ \la \in \h^* |\; \la(h)^p-\la(\pth
h)=\chi(h)^p \text{ for all } h \in \h\}$, the baby Verma module
$Z^\Pi_\chi(\la)$ is defined to be
\[
Z^\Pi_\chi(\la) := \rea{\chi}{\g}
\otimes_{\rea{\chi}{{}^\Pi\mathfrak{b}}} K_\la,
\]
where $K_\la$ is the one-dimensional
$\rea{\chi}{{}^\Pi\mathfrak{b}}$-module upon which $\h$ acts via
multiplication by $\la$ and ${}^\Pi\n^+$ acts as zero. Write $v_\la
= 1 \otimes 1_\la$ in $Z^\Pi_\chi(\la)$.

Index roots in ${}^\Pi\Delta^+$ by $\{1, 2, \ldots, N=|\Delta|/2\}$
in a way that is compatible with heights of roots. Here by
compatible we mean that the shorter the root is in height the
smaller it is indexed. For $\alpha \in {}^\Pi\Delta^+$, put
\[
m_\alpha =\begin{cases} p-1 & \text{if } \alpha \in
{}^\Pi\ev{\Delta}^+;\\
1 & \text{if } \alpha \in {}^\Pi\od{\Delta}^+. \end{cases}
\]
\begin{lemma}\label{lem:lowest vector}
Any nonzero submodule $S$ of $Z^\Pi_\chi(\la)$ contains vector
$X_{-\alpha_1}^{m_{\alpha_1}} \cdots X_{-\alpha_N}^{m_{\alpha_N}}
v_\la$.
\end{lemma}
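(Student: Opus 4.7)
My plan is to identify $Z^\Pi_\chi(\la)$ with the regular representation of the reduced enveloping superalgebra $A := \rea{0}{{}^\Pi \n^-}$ via the super PBW theorem, and then reduce the claim to the assertion that $w := X_{-\alpha_1}^{m_{\alpha_1}} \cdots X_{-\alpha_N}^{m_{\alpha_N}}$ spans the socle of $A$.

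Since $\chi$ is semisimple, $\chi(X_\alpha) = 0$ for every $\alpha \in \ev \Delta$, and in particular $\chi$ vanishes on the even part of ${}^\Pi \n^-$, so the defining relations of $A$ hold in $\rea{\chi}{\g}$ and $\rea{\chi}{{}^\Pi \n^-} = A$ embeds. The super PBW theorem then yields a left $A$-module isomorphism $A \cong Z^\Pi_\chi(\la)$ sending $a \mapsto av_\la$. Under this identification a $\rea{\chi}{\g}$-submodule $S$ of $Z^\Pi_\chi(\la)$ is in particular an $A$-submodule, and corresponds to a nonzero left ideal $\widetilde S \subseteq A$. Once it is shown that $\Soc(A) = Kw$ is one-dimensional, the lemma follows: by finite-dimensionality $\widetilde S$ contains a minimal nonzero left subideal, which is simple and therefore must equal the unique one-dimensional simple submodule $\Soc(A) = Kw$, giving $wv_\la \in S$.

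For the inclusion $Kw \subseteq \Soc(A)$, I use the weight grading of $A$ under the $\h$-action. The super PBW basis of $A$ consists of monomials $X_{-\alpha_1}^{a_1} \cdots X_{-\alpha_N}^{a_N}$ with $0 \le a_i \le m_{\alpha_i}$; these have $\h$-weights $-\sum_i a_i \alpha_i$, and the unique minimum weight occurring in $A$ is $\mu := -\sum_i m_{\alpha_i}\alpha_i$, attained only by $w$. For each index $j$, the element $X_{-\alpha_j}w$ would carry weight $\mu - \alpha_j$, which is strictly smaller than every weight of $A$; hence $X_{-\alpha_j}w = 0$. Thus $w$ is annihilated by the augmentation ideal of $A$, so $Kw$ is a trivial simple $A$-submodule of the regular representation.

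For the reverse inclusion I invoke the local Frobenius structure of $A$. Since ${}^\Pi \n^-$ is supernilpotent and $\chi$ vanishes on its even part, a standard Engel-type argument shows that the trivial module is the only simple $A$-module, so $A$ is local; moreover reduced enveloping superalgebras are Frobenius, for instance via the Poincar\'e-duality form given by the coefficient of $w$ in the PBW expansion. For any local Frobenius algebra the socle is one-dimensional, forcing $\Soc(A) = Kw$. The main obstacle in the whole plan is really the transfer of these ``local Frobenius'' structural facts from the purely even setting to the $\Z_2$-graded one; once those are in place, the weight-space observation and socle extraction are automatic, and everything else is formal.
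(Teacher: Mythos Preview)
Your argument is correct. The paper gives no proof of its own, merely citing Rudakov's Proposition~4; Rudakov's original argument in the Lie-algebra case is a direct induction on PBW monomials, repeatedly applying negative root vectors to drive any nonzero element of a submodule down to the extremal monomial. Your proof is a conceptual repackaging of the same mechanism: you identify $Z^\Pi_\chi(\la)$ with the left regular module of the local Frobenius superalgebra $A=\rea{0}{{}^\Pi\n^-}$ and observe that the extremal PBW monomial $w$ spans its one-dimensional socle. The weight observation $X_{-\alpha_j}w=0$ is exactly what drives Rudakov's inductive step, while the local--Frobenius input (socle simple, hence one-dimensional) replaces his explicit PBW bookkeeping. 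Both routes transfer to the super setting without difficulty; yours has the minor advantage that the Frobenius property of reduced enveloping superalgebras and the locality of $\rea{0}{\n}$ for nilpotent $\n$ are already on record (cf.\ \cite{WZ1}), so no new combinatorics needs to be redone.
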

\begin{proof}
The proof is similar to the proof of \cite[Proposition~4]{Rud} and
will be skipped here.
\end{proof}

\begin{lemma}\label{lem:top poly-existence}
In $U(\g)$, we have
\[
X_{\alpha_1}^{m_{\alpha_1}}\cdots
X_{\alpha_N}^{m_{\alpha_N}}X_{-\alpha_1}^{m_{\alpha_1}} \cdots
X_{-\alpha_N}^{m_{\alpha_N}} - {}^\Pi\Phi \in U(\g){}^\Pi \n^+,
\]
where ${}^\Pi\Phi$ is a polynomial in $\{H_\alpha\vert\;\alpha \in
\Pi\}$ of degree $(\frac{(p-1)|\ev \Delta|}{2}+ \frac{|\od
\Delta|}{2})$.
\end{lemma}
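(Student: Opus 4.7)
The proof adapts the Lie-algebra argument of \cite[Sect.~3]{Rud} to the super setting, splitting into an existence step and a degree-bound step.

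For existence, set $P := \prod_{i=1}^N X_{\alpha_i}^{m_{\alpha_i}} \cdot \prod_{i=1}^N X_{-\alpha_i}^{m_{\alpha_i}}$. Under the adjoint action of $\h$, $P$ has weight zero, and the right ideal $U(\g){}^\Pi\n^+$ is $\h$-stable, so the weight grading descends to $U(\g)/U(\g){}^\Pi\n^+$. The super PBW theorem gives this quotient a basis of ordered monomials in $\{X_{-\alpha} : \alpha \in {}^\Pi\Delta^+\}$ multiplied by polynomials in $\{H_\beta : \beta \in \Pi\}$. A nonzero monomial involving any negative root vector has strictly negative weight, so the only weight-zero basis elements are pure polynomials in the $H_\beta$'s. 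Hence $P \equiv {}^\Pi\Phi \pmod{U(\g){}^\Pi\n^+}$ for some ${}^\Pi\Phi \in K[H_\beta : \beta \in \Pi]$.

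For the degree bound, I would normal-order $P$ via successive applications of the super-commutator identity $XY = (-1)^{|X||Y|}YX + [X,Y]$. Each application either reorders two factors or replaces them with a single bracket, reducing the $\g$-factor count by one. A Cartan element can appear in a PBW-reduced monomial only from a matched commutator $[X_\alpha, X_{-\alpha}]$, since cross-commutators $[X_\alpha, X_{-\beta}]$ with $\alpha \neq \beta$ produce root vectors, not elements of $\h$. Using the height-compatible indexing of ${}^\Pi\Delta^+$, one performs an induction on height: starting with $X_{\alpha_N}$, each commutation with an $X_{-\alpha_j}$ yields either an element of $\h$ (if $j=N$) or a root vector of strictly smaller height that can be absorbed into the lower-height tail. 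At the end, each of the $\sum_\alpha m_\alpha$ original positive-root factors contributes at most one $H$ factor to a surviving purely $U(\h)$-valued monomial. Consequently $\deg {}^\Pi\Phi \leq \sum_\alpha m_\alpha = \frac{(p-1)|\ev\Delta|}{2} + \frac{|\od\Delta|}{2}$.

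The main obstacle is the bookkeeping for the degree bound: cross-commutators generate new root vectors that can themselves enter subsequent commutations, and one needs the height-compatible ordering to organize an induction that terminates and correctly counts the maximal number of matched commutators achievable. The key inductive invariant is that after processing all root vectors of height $\geq h$, the residue has a factored form with the ``height-$\geq h$ part'' already in $S(\h)$ of appropriate degree. Signs coming from the super-commutator identity only rescale coefficients of contributions in $S(\h)$ and do not affect the counting.
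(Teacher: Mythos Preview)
Your approach is exactly the one the paper intends: the omitted proof simply cites Rudakov's Proposition~5, and your weight-grading argument for existence together with the commutator-counting (equivalently, the Kostant-type filtration assigning degree~$1$ to $\h\oplus{}^\Pi\n^+$ and degree~$0$ to ${}^\Pi\n^-$) for the degree bound is precisely how that argument runs in the super setting.

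One point to tighten: as written you establish only $\deg {}^\Pi\Phi \le \sum_\alpha m_\alpha$, whereas the lemma asserts equality. You have two options. Either observe that the leading term comes from the branch in which every $X_{\alpha_i}^{m_{\alpha_i}}$ is paired directly with $X_{-\alpha_i}^{m_{\alpha_i}}$; since the roots are indexed by increasing height, moving $X_{\alpha_N}^{m_{\alpha_N}}$ past $X_{-\alpha_1}^{m_{\alpha_1}}\cdots X_{-\alpha_{N-1}}^{m_{\alpha_{N-1}}}$ without bracketing and then collapsing $X_{\alpha_N}^{m_{\alpha_N}}X_{-\alpha_N}^{m_{\alpha_N}}$ to a nonzero polynomial of degree $m_{\alpha_N}$ in $H_{\alpha_N}$ (and iterating) gives a nonzero top-degree contribution. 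Or, more economically, note that only the inequality $\deg {}^\Pi\Phi \le \sum_\alpha m_\alpha$ is actually used downstream: in Proposition~\ref{prop:poly'} one exhibits a product of that exact degree dividing $\Phi'$, and the upper bound forces $\Phi'\sim$ that product, whence the exact degree follows \emph{a~posteriori}. Either remark completes the argument.
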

\begin{proof}
The proof is similar to the proof of \cite[Proposition~5]{Rud} and
will be omitted here.
\end{proof}

\begin{proposition}\label{prop:irred-verma}
The baby Verma module $Z^\Pi_\chi(\la)$ is irreducible if and only
if ${}^\Pi\Phi(\la) \neq 0$ for $\la \in \La_\chi$.
\end{proposition}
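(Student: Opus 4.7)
The plan is to reduce irreducibility to the question of whether $v_\la$ lies in the submodule generated by $v^- := X_{-\alpha_1}^{m_{\alpha_1}} \cdots X_{-\alpha_N}^{m_{\alpha_N}} v_\la$, and then to test this condition via Lemma~\ref{lem:top poly-existence}. Since $v_\la$ generates $Z^\Pi_\chi(\la)$, the module is irreducible if and only if every nonzero submodule is the whole module. By Lemma~\ref{lem:lowest vector}, every nonzero submodule contains $v^-$, so irreducibility is equivalent to $v_\la \in \rea{\chi}{\g}\cdot v^-$.

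For the ``if'' direction, suppose ${}^\Pi\Phi(\la)\neq 0$. Applying the raising monomial $X_{\alpha_1}^{m_{\alpha_1}}\cdots X_{\alpha_N}^{m_{\alpha_N}}$ to $v^-$ and invoking Lemma~\ref{lem:top poly-existence} together with the fact that $\rea{\chi}{\g}\cdot{}^\Pi\n^+$ annihilates $v_\la$, we obtain ${}^\Pi\Phi(\la)\,v_\la\neq 0$. Hence $v_\la\in\rea{\chi}{\g}\cdot v^-$ and $Z^\Pi_\chi(\la)$ is irreducible.

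For the converse, suppose ${}^\Pi\Phi(\la)=0$; we show the $v_\la$-coefficient of $u\,v^-$ vanishes for every $u\in\rea{\chi}{\g}$. Using the PBW decomposition of $\rea{\chi}{\g}$ induced by $\g={}^\Pi\n^-\oplus\h\oplus{}^\Pi\n^+$, write $u=\sum_j y_j\, h_j\, x_j$ with $y_j$ a monomial in ${}^\Pi\n^-$, $h_j$ a polynomial in $\h$, and $x_j$ a monomial in ${}^\Pi\n^+$. The PBW basis of $Z^\Pi_\chi(\la)$ shows that its $\h$-weights are exactly $\{\la-\sum_i a_i\alpha_i:0\le a_i\le m_{\alpha_i}\}$; since ${}^\Pi\Delta^+$ lies in a pointed cone, $v^-$ is the unique weight vector of minimal weight. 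Hence any element of the ${}^\Pi\n^-$-part of nonzero weight sends $v^-$ to a weight outside $Z^\Pi_\chi(\la)$ and therefore kills it, so $y_j v^- = y_j(1)\, v^-$, where $y_j(1)$ denotes the constant term; and $h_j\, v^- = h_j(\la-\sum_\alpha m_\alpha\alpha)\, v^-$. A PBW monomial $x_j=X_{\alpha_1}^{c_1}\cdots X_{\alpha_N}^{c_N}$ with $0\le c_i\le m_{\alpha_i}$ carries $v^-$ to weight $\la+\sum_i(c_i-m_{\alpha_i})\alpha_i$; a nonzero $v_\la$-component requires $\sum_i(m_{\alpha_i}-c_i)\alpha_i=0$, and pointedness of the positive cone forces $c_i=m_{\alpha_i}$ for every $i$. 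For this unique top monomial Lemma~\ref{lem:top poly-existence} yields a $v_\la$-coefficient equal to ${}^\Pi\Phi(\la)=0$. Consequently $v_\la\notin\rea{\chi}{\g}\cdot v^-$, so $Z^\Pi_\chi(\la)$ is reducible.

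The principal obstacle is the weight-stratification step on the ${}^\Pi\n^-$ side: one must confirm that every monomial in ${}^\Pi\n^-$ of nonzero $\h$-weight annihilates $v^-$, which rests on the PBW description of $Z^\Pi_\chi(\la)$ and pointedness of the positive root cone. A super-specific sanity check is needed to ensure the PBW basis still behaves as expected when non-isotropic odd roots are present (where $X_\alpha^2\in\ev\g$ is nonzero), so that the exponent bounds $0\le a_i\le m_{\alpha_i}$ really capture all basis vectors and the weight minimum is realized uniquely by $v^-$.
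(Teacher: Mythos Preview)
Your approach is exactly the one the paper intends: it merely cites Lemmas~\ref{lem:lowest vector} and~\ref{lem:top poly-existence} and leaves Rudakov's argument to the reader, which you have written out. The ``if'' direction is correct as stated.

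The ``only if'' direction has a genuine characteristic-$p$ gap. You argue via $\h$-weights, asserting that $v^-$ is the unique vector of minimal weight and that $\sum_i(m_{\alpha_i}-c_i)\alpha_i=0$ forces $c_i=m_{\alpha_i}$ by ``pointedness of the positive cone''. Over $K$ this fails: weights lie in $\h^*$ where $p\cdot\alpha=0$, so there is no partial order, and weight multiplicities are typically larger than $1$ (already for $\ev\g=\mathfrak{sl}_3$ the $\la$-weight space of $Z^\Pi_\chi(\la)$ has dimension $p$, and the weight space containing $v^-$ is likewise larger than $Kv^-$). The repair is to trade the $\h$-weight grading for the root-lattice grading: since $\chi$ is semisimple it vanishes on root vectors and $X_\alpha^{[p]}=0$ for each $\alpha\in\ev\Delta$, so the relations defining $\rea{\chi}{\g}$ are $Q$-homogeneous and $Z^\Pi_\chi(\la)$ is graded by the free abelian group $Q$, with $X_{-\alpha_1}^{a_1}\cdots X_{-\alpha_N}^{a_N}v_\la$ in degree $-\sum_i a_i\alpha_i$. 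In that grading $v_\la$ genuinely spans the degree-$0$ piece and $v^-$ the unique bottom piece, ${}^\Pi\n^- v^-=0$ follows at once, and pointedness (now an honest statement over $\Z$) yields $c_i=m_{\alpha_i}$; your computation then goes through unchanged. (A separate minor slip: with your written PBW order $u=\sum_j y_jh_jx_j$ it is $x_j$, not $y_j$, that acts first on $v^-$; for the sentences ``$y_jv^-=y_j(1)v^-$'' and ``$h_jv^-=\ldots$'' to make sense you want the opposite order $u=\sum_j x_jh_jy_j$.)
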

\begin{proof}
Follows readily from Lemmas~\ref{lem:lowest vector} and \ref{lem:top
poly-existence}.
\end{proof}

Finally in this subsection, put ${}^\Pi\Phi'(\la)= {}^\Pi\Phi(\la -
{}^\Pi\rho)$ for $\la \in \h^*$.

\subsection{}
Retain the notations from previous subsection. Let $\delta \in \Pi$
be a simple root. Then it is one of the following three types:
\begin{itemize}
\item[(i)] $\delta \in \ev{\overline{\Delta}}$;
\item[(ii)] $\delta \in \od{\overline{\Delta}}$;
\item[(iii)] $\delta \in \Delta_{\bar{1}} \setminus
\od{\overline{\Delta}}$ with $2\delta \in \Delta_{\bar{0}} \setminus
\ev{\overline{\Delta}}$.
\end{itemize}
For such a $\delta$, we shall denote
\begin{equation*}
\delta^*=\begin{cases} \delta, & \text{in case (i) and (ii);}\\
\{\delta, 2\delta\}, & \text{in case (iii)}.
\end{cases}
\end{equation*}

Let $r_\delta$ be the (even or odd) reflection associated to
$\delta$. When $\delta$ is of type (i), $r_\delta$ is just the even
reflection in $\h^*$ defined by
\begin{equation}\label{equ:reflection}
r_\delta(\la)=\la- \frac{2(\delta,\la)}{(\delta,\delta)}\delta,
\quad \text{ for }\la \in \h^*.
\end{equation}
When $\delta$ is of type (iii), $r_\delta$ is by definition the even
reflection $r_{2\delta}$, which is also given by
formula~(\ref{equ:reflection}). When $\delta$ is of type (ii),
$r_\delta$ is given by the following
\[
r_\delta(\beta)=\begin{cases} -\delta, & \text{if }\beta=\delta;\\
\beta+\delta,&\text{if }(\delta,\beta) \neq 0;\\
\beta, &\text{if }\beta\neq \delta\text{ and }(\delta,\beta)=0.
\end{cases}
\]

It is known that (see, for example, \cite{Ser}) $r_\delta \Pi$ is
the set of simple roots of the positive system
${}^{r_\delta\Pi}\Delta^+:=r_\delta({}^\Pi \Delta^+)$, $-\delta^*
\in {}^{r_\delta\Pi}\Delta^+$, and ${}^{r_\delta\Pi}\Delta^+ \cap
{}^\Pi \Delta^+={}^\Pi \Delta^+\setminus \delta^*$.

By going through the same argument in the previous subsection, we
know that there is a polynomial ${}^{r_\delta \Pi}\Phi$ on $\h^*$ of
degree $(\frac{(p-1)|\ev \Delta|}{2}+ \frac{|\od \Delta|}{2})$
satisfying that ${}^{r_\delta \Pi}\Phi(\la)\neq 0$ if and only if
the baby Verma module $Z_\chi^{r_\delta\Pi}(\la)$ associated to the
positive system ${}^{r_\delta \Pi}\Delta^+$ is irreducible for any
$\la \in \La_\chi$.

For two polynomials $f_1$ and $f_2$, write $f_1 \sim f_2$ if $f_1 =
cf_2$ for some $c \in K^\times$.
\begin{lemma}\label{lem:poly'sim}
We have ${}^{r_\delta \Pi} \Phi' \sim {}^\Pi \Phi'$ for a simple
root $\delta \in \Pi$.
\end{lemma}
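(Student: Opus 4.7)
The plan is to proceed by case analysis on the type of the simple root $\delta$: (i) $\delta\in\ev{\overline\Delta}$, (ii) $\delta\in\od{\overline\Delta}$, and (iii) $\delta$ odd with $2\delta\in\ev\Delta$. The common strategy exploits Lemma~\ref{lem:top poly-existence} together with the observation from the text that the positive systems ${}^\Pi\Delta^+$ and ${}^{r_\delta\Pi}\Delta^+$ differ only by interchanging $\delta^*$ with $-\delta^*$, so the root-vector products defining ${}^\Pi\Phi$ and ${}^{r_\delta\Pi}\Phi$ differ only in the $\delta^*$-contribution. A preliminary bookkeeping from the description of the positive systems shows that the $\rho$-shift ${}^{r_\delta\Pi}\rho-{}^\Pi\rho$ equals $-\delta$, $+\delta$, and $-\delta$ in cases (i), (ii), and (iii) respectively.

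For case (i) I would take a representative $\tilde r_\delta\in N_{\ev G}(\h)$ of the Weyl reflection $r_\delta$ and apply $\Ad(\tilde r_\delta)$ to the identity $E_\Pi F_\Pi\equiv{}^\Pi\Phi\pmod{U(\g){}^\Pi\n^+}$. After rescaling root vectors this produces the analogous identity for $r_\delta\Pi$, yielding ${}^{r_\delta\Pi}\Phi(\la)\sim{}^\Pi\Phi(r_\delta\la)$. Since $r_\delta$ permutes ${}^\Pi\od\Delta^+$ in this case (as follows from the $\cap$-description of ${}^{r_\delta\Pi}\Delta^+$), one has ${}^{r_\delta\Pi}\rho=r_\delta({}^\Pi\rho)$, so ${}^{r_\delta\Pi}\Phi'(\la)\sim{}^\Pi\Phi'(r_\delta\la)$. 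The claim then reduces to the $r_\delta$-invariance of ${}^\Pi\Phi'$, a standard Rudakov-type fact (the Weyl-invariance of the baby Verma irreducibility criterion under the $\rho$-shift) extractable from the explicit factorization of ${}^\Pi\Phi$ in $U(\h)$.

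For case (ii), where $m_\delta=1$, I would isolate the $\delta$-factor: $[X_\delta,X_{-\delta}]=H_\delta$ (super-commutator, symmetric for odd-odd) together with $X_{-\delta}^2=0$ from isotropy yields $X_\delta X_{-\delta}\equiv H_\delta\pmod{U(\g){}^\Pi\n^+}$, and symmetrically $X_{-\delta}X_\delta\equiv H_\delta\pmod{U(\g){}^{r_\delta\Pi}\n^+}$. The other positive-root contributions are common to both systems. Substituting the shift ${}^{r_\delta\Pi}\rho={}^\Pi\rho+\delta$ and using the isotropy identity $\delta(H_\delta)=0$, the $\delta$-factors of ${}^\Pi\Phi'(\la)$ and ${}^{r_\delta\Pi}\Phi'(\la)$ coincide, giving ${}^{r_\delta\Pi}\Phi'\sim{}^\Pi\Phi'$. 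Case (iii) combines the two preceding arguments: the odd $\delta$-contribution is treated as in (ii), the even $2\delta$-contribution via the Weyl reflection $r_{2\delta}=r_\delta$ as in (i), and the combined shift $-\delta$ reconciles both adjustments.

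The principal obstacle is justifying that, beyond the explicit $\delta^*$-factors, the remaining contributions to ${}^\Pi\Phi$ and ${}^{r_\delta\Pi}\Phi$ agree up to a nonzero scalar as polynomials in $U(\h)$. This reduces to establishing an essentially root-by-root factorization ${}^\Pi\Phi\sim\prod_{\alpha\in{}^\Pi\Delta^+}g_\alpha$ with $g_\alpha$ determined by $\alpha$ alone up to a scalar—a claim verifiable inductively on root heights from the PBW theorem, but whose careful bookkeeping, especially for non-commuting odd factors and the reordering between the two distinct left ideals $U(\g){}^\Pi\n^+$ and $U(\g){}^{r_\delta\Pi}\n^+$, forms the main technical content of the proof.
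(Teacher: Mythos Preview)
Your approach is quite different from the paper's, and the obstacle you flag at the end is a genuine gap that your outline does not close.

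The paper's argument is purely module-theoretic and never manipulates ${}^\Pi\Phi$ directly. For each type of $\delta$ it exhibits an explicit vector in $Z_\chi^\Pi(\la)$ that is singular for the new positive system ${}^{r_\delta\Pi}\Delta^+$: namely $X_{-\delta}^{p-1}v_\la$ in case~(i), $X_{-\delta}v_\la$ in case~(ii), and $X_{-\delta}X_{-2\delta}^{p-1}v_\la$ in case~(iii). This yields a nonzero homomorphism $Z_\chi^{r_\delta\Pi}(\la+\delta)\to Z_\chi^\Pi(\la)$ between modules of equal dimension, so reducibility of the source forces reducibility of the target. By Proposition~\ref{prop:irred-verma} one obtains a divisibility ${}^{r_\delta\Pi}\Phi(\la+\delta)\mid{}^\Pi\Phi(\la)$, and equality of degrees (Lemma~\ref{lem:top poly-existence}) upgrades this to $\sim$. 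The $\rho$-identity ${}^{r_\delta\Pi}\rho={}^\Pi\rho-\delta$ then converts it to ${}^{r_\delta\Pi}\Phi'\sim{}^\Pi\Phi'$. No factorization of $\Phi$, no comparison of left ideals, and no Weyl-invariance is ever invoked.

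Your route tries to compare the two polynomials directly in $U(\h)$, and the difficulty you name is real: the two products $E_\Pi F_\Pi$ and $E_{r_\delta\Pi}F_{r_\delta\Pi}$ are taken in different height-orders and reduced modulo two \emph{different} left ideals $U(\g){}^\Pi\n^+$ and $U(\g){}^{r_\delta\Pi}\n^+$, so asserting that the non-$\delta^*$ contributions agree up to scalar is not automatic. Proving a root-by-root factorization ${}^\Pi\Phi\sim\prod_\alpha g_\alpha$ with $g_\alpha$ intrinsic to $\alpha$ is essentially Proposition~\ref{prop:poly'}, which in the paper is proved \emph{using} the present lemma; so you cannot appeal to it here. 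The same circularity infects your case~(i): the $r_\delta$-invariance of ${}^\Pi\Phi'$ that you call a ``standard Rudakov-type fact'' is, in the super setting of this paper, a corollary of Proposition~\ref{prop:poly'} rather than an independent input. A direct PBW verification might in principle close these gaps, but you have not supplied one, and the paper's singular-vector argument shows it is unnecessary.
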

\begin{proof}
Let us prove when $\delta$ is of type (iii), the other two cases can
be proved in a similar fashion. First we observe that the vector
$X_{-\delta}X^{p-1}_{-2\delta} v_\la$ in $Z_\chi^\Pi(\la)$ is
annihilated by any root vector $X_\alpha$ for $\alpha \in
{}^{r_\delta \Pi}\Delta^+$. It follows that there is a nontrivial
$U(\g)$-module homomorphism
\[
Z_\chi^{r_\delta \Pi}(\la+\delta) \ra Z_\chi^\Pi(\la).
\]
Since the two baby Verma modules have the same dimension,
$Z_\chi^{r_\delta\Pi}(\la +\delta)$ being reducible will imply that
$Z_\chi^\Pi(\la)$ is reducible. By
Proposition~\ref{prop:irred-verma}, we have
${}^{r_\delta\Pi}\Phi(\la+\delta)$ divides ${}^\Pi\Phi(\la)$, and so
${}^{r_\delta\Pi}\Phi(\la+\delta) \sim {}^\Pi \Phi(\la)$. Hence
${}^{r_\delta\Pi}\Phi'(\la) \sim {}^\Pi \Phi'(\la)$ since
${}^{r_\delta\Pi}\rho={}^\Pi \rho -\delta$.

When $\delta$ is of type (i), then as in the classical case, the
vector $X_{-\delta}^{p-1}v_\la$ in $Z_\chi^\Pi(\la)$ is a singular
vector for the positive system ${}^{r_\delta\Pi}\Delta^+$. We then
can argue the same way as for $\delta$'s of type (iii).

When $\delta$ is of type (ii), we only need to observe that the
vector $X_{-\delta}v_\la$ in $Z_\chi^\Pi(\la)$ is a singular vector
for the positive system ${}^{r_\delta\Pi}\Delta^+$. The rest of the
argument is the same as for $\delta$'s of type (iii).

\end{proof}

Since by applying (even and odd) simple reflections, we are able to
obtain any set $\tilde{\Pi}$ of simple roots from a given set $\Pi$
of simple roots, we conclude by Lemma~\ref{lem:poly'sim} that, the
polynomial ${}^{\tilde{\Pi}}\Phi'$ does not depend on the choice of
$\tilde{\Pi}$ up to ``$\sim$''-equivalence. Thus we can suppress the
left superscript $\tilde{\Pi}$ of ${}^{\tilde{\Pi}}\Phi'$ and write
$\Phi'$ instead.

\begin{proposition}\label{prop:poly'}
We have $\Phi'(\la) \sim \prod_{\alpha \in {}^\Pi
\Delta^+_{\bar{0}}} ((\la|\,\alpha)^{p-1}-1) \cdot \prod_{\beta \in
{}^\Pi \Delta^+_{\bar{1}}}(\la|\,\beta)$, for any set of simple
roots $\Pi$.
\end{proposition}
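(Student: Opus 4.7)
The strategy is to exhibit each linear factor on the right-hand side as a divisor of $\Phi'(\la)$ and then match total degrees. By Lemma~\ref{lem:poly'sim}, $\Phi'$ is independent of $\Pi$ up to $\sim$-equivalence, and the right-hand side shares the same invariance (even reflections permute positive roots and preserve each $(\la|\alpha)^{p-1}-1$, while odd reflections merely flip the sign of a single odd-root factor). Hence we are free to choose $\Pi$ so as to place any prescribed root into the simple system.

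For a positive even root $\alpha$, pick $\Pi$ with $\alpha$ simple and work inside the $\mathfrak{sl}_2$-subalgebra $\langle X_\alpha, X_{-\alpha}, H_\alpha\rangle$. Since $\chi$ is semisimple, $\chi(X_{\pm\alpha})=0$, and the familiar identity $X_\alpha (X_{-\alpha}^k v_\mu) = k((\mu|H_\alpha) - k + 1) X_{-\alpha}^{k-1} v_\mu$ holds in $Z_\chi^\Pi(\mu)$. Simplicity of $\alpha$ ensures that $X_\gamma$ for each $\gamma \in \Pi \setminus \{\alpha\}$ commutes with $X_{-\alpha}$ modulo ${}^\Pi\n^+$, so $X_{-\alpha}^k v_\mu$ is a singular vector whenever $(\mu|H_\alpha) = k-1$, for $k = 1,\dots,p-1$. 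Proposition~\ref{prop:irred-verma} thus forces ${}^\Pi\Phi(\mu)=0$ on these $p-1$ hyperplanes; after the $\rho$-shift (using $(\rho|H_\alpha)=1$) and the identity $\prod_{j=1}^{p-1}(t-j) = t^{p-1}-1$, this yields $((\la|\alpha)^{p-1}-1) \mid {}^\Pi\Phi'(\la)$, identifying $(\la|H_\alpha)$ with $(\la|\alpha)$ up to a constant in $\mathbb{F}_p^\times$ (harmless up to $\sim$ since $c^{p-1}=1$ there).

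For a positive odd root $\beta$, by a sequence of even and odd reflections arrange $\beta \in \Pi$. The vector $X_{-\beta}v_\mu$ is annihilated by $X_\gamma$ for each other simple $\gamma$ (since $\gamma-\beta$ is not a root) and, using $X_\beta v_\mu = 0$ together with $[X_\beta, X_{-\beta}]=H_\beta$, by $X_\beta$ exactly when $(\mu|H_\beta)=0$. If $\beta$ is of type (iii) one also checks $[X_{2\beta}, X_{-\beta}] \in K \cdot X_\beta$, so $X_{2\beta}X_{-\beta}v_\mu=0$. Hence $Z_\chi^\Pi(\mu)$ is reducible along $(\mu|\beta)=0$. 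For isotropic (type (ii)) $\beta$, $(\rho|\beta)=0$, giving $(\la|\beta) \mid \Phi'(\la)$ directly. For type (iii) $\beta$ the $\rho$-shift introduces an additive $(\rho|\beta)$; combining this with the even factor $((\la|2\beta)^{p-1}-1)$ attached to the partner root $2\beta$, and invoking the Fermat identity $2^{p-1}\equiv 1\pmod p$, the pair collapses to $(\la|\beta)^p - (\la|\beta) = \prod_{c\in\mathbb{F}_p}((\la|\beta)-c)$, which absorbs the shift and contributes the expected $(\la|\beta)$.

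The linear factors on the right-hand side are pairwise coprime (apart from the $\{\beta,2\beta\}$ pairs just handled), and their total degree $(p-1)|\ev\Delta^+| + |\od\Delta^+| = \tfrac{(p-1)|\ev\Delta|}{2} + \tfrac{|\od\Delta|}{2}$ equals $\deg\Phi'$ by Lemma~\ref{lem:top poly-existence}. Therefore the product divides $\Phi'(\la)$ and has matching degree, giving the $\sim$-equivalence. The principal obstacle is the case-by-case bookkeeping around type (iii) roots and the reconciliation of the $\rho$-shift with the normalization of the bilinear form; the Frobenius identity on $\mathbb{F}_p$ is the key device that makes the two accounts agree.
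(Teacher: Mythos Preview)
Your overall architecture---show each factor on the right divides $\Phi'$ and then match degrees---is the same as the paper's, and your treatment of type~(i) even roots and type~(ii) isotropic odd roots is fine. The gap is in the type~(iii) case. Your even-root argument begins ``pick $\Pi$ with $\alpha$ simple,'' but an even root of the form $2\beta$ with $\beta\in\Delta_{\bar 1}\setminus\overline{\Delta}_{\bar 1}$ is \emph{never} simple in any positive system: since $\beta$ and $2\beta$ always lie on the same side, $2\beta=\beta+\beta$ is decomposable whenever it is positive. So your $\mathfrak{sl}_2$ singular-vector calculation never produces the factor $((\la|2\beta)^{p-1}-1)$. Your odd-root argument for such $\beta$ then yields only the single hyperplane $(\mu|H_\beta)=0$, i.e.\ one linear factor after the $\rho$-shift. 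The sentence about ``combining this with the even factor \dots\ the pair collapses to $(\la|\beta)^p-(\la|\beta)$'' is a statement about how the \emph{right-hand side} factors, not a proof that this degree-$p$ polynomial divides $\Phi'$; you have established divisibility by only one of its $p$ linear factors. Consequently your divisor falls short of $\deg\Phi'$ by $p-1$ for each type~(iii) pair, and the degree-matching step fails.

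The paper closes exactly this hole by working not with a bare singular vector but with the embedded $\osp(1|2)=\langle X_{\pm\beta},X_{\pm 2\beta},H_\beta\rangle$ and parabolic induction through $\mathfrak p=\osp(1|2)+{}^{\tilde\Pi}\mathfrak b$. The transitivity $Z_\chi^{\tilde\Pi}(\la)\cong U_\chi(\g)\otimes_{U_\chi(\mathfrak p)}Z_\chi^{\mathfrak p}(\la)$ reduces reducibility to that of the $\osp(1|2)$ baby Verma, whose criterion (from \cite[\S6.5]{WZ1}) gives all $p$ hyperplanes $(\la+{}^{\tilde\Pi}\rho|\beta)\in\mathbb F_p$ at once, hence the full factor $(\la|\beta)^p-(\la|\beta)$ dividing $\Phi'$. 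If you want to repair your argument without citing that result, you would have to locate singular vectors of the form $X_{-2\beta}^kX_{-\beta}^\varepsilon v_\mu$ for varying $k$ and $\varepsilon$ and check their annihilation by $X_\beta$ and $X_{2\beta}$ carefully---which amounts to redoing the $\osp(1|2)$ computation inside $\g$.
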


A different choice of simple roots in Proposition~\ref{prop:poly'}
will only lead to a plus/minus sign in the product on the right hand
side in the Proposition.

\begin{proof}
First observe that if $\od\Delta \setminus
\od{\overline{\Delta}}\neq \emptyset$, then any $\delta \in
\od\Delta \setminus \od{\overline{\Delta}}$ appears as a simple root
in some set $\tilde{\Pi}$ of simple roots. The root vector
$X_\delta$ generates an embedded $\osp(1|\,2)$ in $\g$. Consider the
minimal parabolic subalgebra $\p = \osp(1|\,2)
+{}^{\tilde{\Pi}}\mathfrak{b}$, and the induced module
$Z_\chi^\p(\la)=\rea{\chi}{\p}\otimes_{\rea{\chi}{{}^{\tilde{\Pi}}\mathfrak{b}
}}K_\la$. The $\rea{\chi}{\p}$-module $Z_\chi^\p(\la)$ is merely the
baby Verma module $Z_\chi^{\osp(1|2)}(\la)$ of the embedded
$\osp(1|\,2)$ upon which $\h$ acts as weight multiplication by $\la$
and $X_\alpha$ acts zero for $\alpha \in {}^{\tilde{\Pi}}\Delta^+
\setminus\{\delta, 2\delta\}$. By the transitivity of induced
modules, we have
\[
Z_\chi^{\tilde{\Pi}}(\la) \cong \rea{\chi}{\g}
\otimes_{\rea{\chi}{\p}}Z_\chi^\p(\la).
\]
It follows from \cite[Section~6.5]{WZ1} that if $\la$ satisfies
$(\la+{}^{\tilde{\Pi}}\rho|\,\delta)^p -
(\la+{}^{\tilde{\Pi}}\rho|\,\delta)=0$, then
$Z_\chi^{\osp(1|2)}(\la)$ is reducible; hence $Z_\chi^\p(\la)$ and
so $Z_\chi^{\tilde{\Pi}}(\la)$ will be reducible. By
Proposition~\ref{prop:irred-verma}, we have
$(\la+{}^{\tilde{\Pi}}\rho|\,\delta)^p -
(\la+{}^{\tilde{\Pi}}\rho|\,\delta)$ divides
${}^{\tilde{\Pi}}\Phi(\la)$, that is,
$((\la|\,\delta)^p-(\la|\,\delta))$ divides $\Phi'$. Note that for
two such roots $\delta$ and $\delta'$,
$((\la|\,\delta)^p-(\la|\,\delta))$ and
$((\la|\,\delta')^p-(\la|\,\delta'))$ are coprime if $\delta \neq
\pm \delta'$. Since for any such root $\delta$ either $\delta$ or
$-\delta$ is in ${}^\Pi\od{\Delta}^+$, and since the arbitrary
choice of such $\delta$ we conclude that
\[
\prod_{\delta \in {}^\Pi\Delta^+_{\bar{1}}\setminus {}^\Pi
\od{\overline{\Delta}}^+}(\la|\,\delta) \cdot \prod_{2\delta \in
{}^\Pi\Delta^+_{\bar{0}}\setminus {}^\Pi
\ev{\overline{\Delta}}^+}((\la|\,2\delta)^{p-1}-1) \text{ divides }
\Phi'.
\]

Next observe that any odd root $\beta \in \od{\overline{\Delta}}$
(of type (ii)) appears in some set of simple roots. The root vector
$X_\beta$ generates an embedded $\mathfrak{sl}(1|\,1)$. Using
similar arguments as for type (iii) simple roots above, we can show
that
\[
\prod_{\beta \in {}^\Pi \od{\overline{\Delta}}^+}(\la|\,\beta)
\text{ divides }\Phi'.
\]
In the proof, we need an irreducibility criterion for
$\mathfrak{sl}(1|\,1)$-baby Verma modules, which can be easily
deduced from that for $\gl(1|\,1)$-baby Verma modules as in
\cite[Proposition~7.7]{WZ2}.

For roots in $\ev{\overline{\Delta}}$ (of type (i)), in a similar
but classical manner (cf. \cite[Proof of Proposition~6]{Rud}), we
can show that
\[
\prod_{\alpha \in
{}^\Pi\ev{\overline{\Delta}}^+}((\la|\,\alpha)^{p-1}-1) \text{
divides } \Phi'.
\]

Finally, the Proposition follows from a degree consideration and the
fact that the above three factors are mutually coprime.
\end{proof}

\begin{theorem}\label{thm:irred-verma}
A baby Verma module $Z_\chi^\Pi(\la)$ for $\la \in \La_\chi$ is
irreducible if and only if
\[
\prod_{\alpha \in {}^\Pi
\Delta^+_{\bar{0}}}((\la+{}^\Pi\rho|\,\alpha)^{p-1}-1) \cdot
\prod_{\beta \in {}^\Pi \Delta^+_{\bar{1}}} (\la+
{}^\Pi\rho|\,\beta) \neq 0.
\]
\end{theorem}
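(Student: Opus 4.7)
The plan is to derive this theorem as an immediate corollary of Proposition~\ref{prop:irred-verma} together with Proposition~\ref{prop:poly'}, simply by tracking the $\rho$-shift that converts ${}^\Pi\Phi$ into $\Phi'$.

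First I would invoke Proposition~\ref{prop:irred-verma}, which reduces the irreducibility question to the nonvanishing condition ${}^\Pi\Phi(\la)\neq 0$ for $\la \in \La_\chi$. Next, I would recall the definition introduced at the end of Section~4.1, namely ${}^\Pi\Phi'(\mu) = {}^\Pi\Phi(\mu - {}^\Pi\rho)$ for $\mu \in \h^*$; substituting $\mu = \la + {}^\Pi\rho$ gives the identity ${}^\Pi\Phi(\la) = {}^\Pi\Phi'(\la + {}^\Pi\rho)$. Then, because Lemma~\ref{lem:poly'sim} has already shown that ${}^{\tilde\Pi}\Phi'$ is independent of $\tilde\Pi$ up to a nonzero scalar (so we may simply write $\Phi'$), the nonvanishing of ${}^\Pi\Phi(\la)$ is equivalent to the nonvanishing of $\Phi'(\la + {}^\Pi\rho)$.

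Finally I would apply Proposition~\ref{prop:poly'} evaluated at $\la + {}^\Pi\rho$, which gives
\[
\Phi'(\la+{}^\Pi\rho) \sim \prod_{\alpha \in {}^\Pi \Delta^+_{\bar{0}}}\bigl((\la+{}^\Pi\rho\mid \alpha)^{p-1}-1\bigr) \cdot \prod_{\beta \in {}^\Pi \Delta^+_{\bar{1}}}(\la+{}^\Pi\rho\mid \beta).
\]
Since $\sim$ means equality up to a nonzero scalar in $K^\times$, this quantity is nonzero precisely when the displayed product in the statement of the theorem is nonzero, completing the argument.

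There is essentially no obstacle here beyond verifying that the conventions match: one has to make sure that the polynomial $\Phi'$ produced in Proposition~\ref{prop:poly'} really is the $\rho$-shifted version of the ${}^\Pi\Phi$ appearing in Proposition~\ref{prop:irred-verma} (this is literally the definition placed at the end of Section~4.1), and that the $\sim$-ambiguity collected across Lemmas~\ref{lem:poly'sim} and Proposition~\ref{prop:poly'} is only by scalars in $K^\times$, so that it does not affect the locus of nonvanishing. With that bookkeeping in place, the equivalence in the theorem is immediate.
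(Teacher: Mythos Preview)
Your proposal is correct and matches the paper's own proof, which simply says the theorem ``follows readily from Propositions~\ref{prop:irred-verma} and~\ref{prop:poly'}.'' You have spelled out precisely the $\rho$-shift bookkeeping (via the definition ${}^\Pi\Phi'(\mu)={}^\Pi\Phi(\mu-{}^\Pi\rho)$ and Lemma~\ref{lem:poly'sim}) that the paper leaves implicit, so your argument is exactly the intended one with the details filled in.
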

\begin{proof}
Follows readily from Propositions~\ref{prop:irred-verma} and
\ref{prop:poly'}.
\end{proof}

\begin{theorem}\label{thm:semisimplicity-rea}
The algebra $\rea{\chi}{\g}$ is a semisimple algebra if and only if
$\chi$ is regular semisimple.
\end{theorem}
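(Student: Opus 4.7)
The plan is to deduce the theorem from the baby Verma simplicity criterion (Theorem~\ref{thm:irred-verma}) in three stages: (i) reduce the semisimplicity of $\rea{\chi}{\g}$ to the irreducibility of every baby Verma module $Z_\chi^\Pi(\la)$, $\la\in\La_\chi$; (ii) apply Theorem~\ref{thm:irred-verma} to restate this as a non-vanishing condition for all $\la$; (iii) translate that non-vanishing condition into the regular semisimple condition on $\chi$.

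For stage (i), one direction is immediate: in a semisimple algebra any cyclic indecomposable module is simple, and baby Vermas are indecomposable (they have simple head). For the converse, I would use a dimension count. The baby Verma $Z_\chi^\Pi(\la)$ has dimension $p^{N_0}2^{N_1}$ (with $N_0,N_1$ the numbers of positive even and odd roots), the set $\La_\chi$ has cardinality $p^{\dim\h}$, and
\[
\dim \rea{\chi}{\g} = p^{n_0}2^{n_1} = |\La_\chi|\cdot(p^{N_0}2^{N_1})^2.
\]
Baby Vermas for distinct $\la\in\La_\chi$ have distinct $\h$-weight supports and are therefore pairwise non-isomorphic. When $\chi$ is regular semisimple, so that $\scodim\g_\chi=2N_0|\,2N_1$, Theorem~\ref{th:KW} gives $\dim L\geq p^{N_0}2^{N_1}$ for every simple $L$. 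If moreover every baby Verma is simple, each is a minimal-dimensional simple, and since every simple is a quotient of some baby Verma, the $Z_\chi^\Pi(\la)$ exhaust the isomorphism classes of simples. The total dimension identity above then forces the Wedderburn decomposition $\rea{\chi}{\g}\cong\bigoplus_{\la\in\La_\chi}\End(Z_\chi^\Pi(\la))$, after due bookkeeping of the type $M$ versus type $Q$ dichotomy for simple associative superalgebras, whence semisimplicity.

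For stages (ii) and (iii), by Theorem~\ref{thm:irred-verma} the semisimplicity of $\rea{\chi}{\g}$ is equivalent to
\[
\prod_{\alpha\in{}^\Pi\ev\Delta^+}\!\!\big((\la+{}^\Pi\rho|\alpha)^{p-1}-1\big)\cdot\!\!\prod_{\beta\in{}^\Pi\od\Delta^+}\!\!(\la+{}^\Pi\rho|\beta)\neq 0\quad\text{for every }\la\in\La_\chi.
\]
Fix $\gamma\in\Delta$. As $\la$ ranges over $\La_\chi$, the value $(\la+{}^\Pi\rho|\gamma)=\la(H_\gamma)+{}^\Pi\rho(H_\gamma)$ sweeps out an Artin--Schreier coset of $\mathbb{F}_p$ in $K$: it is governed by the equation $t^p-t=\chi(H_\gamma)^p$ together with the $\mathbb{F}_p$-valued shift by ${}^\Pi\rho(H_\gamma)$. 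For an even root $\gamma=\alpha$, this coset avoids the $(p-1)$-th roots of unity $\mathbb{F}_p^\times$ iff it is disjoint from $\mathbb{F}_p$, iff $\chi(H_\alpha)\neq 0$; for an odd root $\gamma=\beta$, the coset avoids $0$ under the identical condition $\chi(H_\beta)\neq 0$. Thus the non-vanishing holds for every $\la\in\La_\chi$ iff $\chi(H_\gamma)\neq 0$ for every $\gamma\in\Delta$. A direct root-vector computation (using that $[X_\gamma,X_{-\gamma}]$ lies in $\h$) shows that this last condition is equivalent to $\g_\chi=\h$, i.e., to $\chi$ being regular semisimple.

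The hard part will be stage (i), where the Wedderburn decomposition must correctly track simple baby Vermas of type $M$ versus those of type $Q$ so that total dimensions match; I expect this to require a case-by-case verification that baby Vermas for regular semisimple $\chi$ are of type $M$, or an alternative dimension estimate that accommodates both types uniformly. A minor secondary subtlety in stage (iii) concerns the isotropic odd roots $\beta\in\od{\overline{\Delta}}$, where $(\beta,\beta)=0$ obstructs a naive $\mathfrak{sl}_2$-style definition of $H_\beta$; one instead defines $H_\beta$ via the form-induced identification $\h^*\cong\h$, after which the Artin--Schreier analysis proceeds uniformly for all root types.
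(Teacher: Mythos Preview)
Your plan is correct and follows the same route as the paper: reduce semisimplicity of $\rea{\chi}{\g}$ to simplicity of every baby Verma via Wedderburn plus the dimension identity $|\La_\chi|\cdot(p^{N_0}2^{N_1})^2=\dim\rea{\chi}{\g}$, invoke Theorem~\ref{thm:irred-verma}, and translate the non-vanishing condition into $\chi(H_\gamma)\neq 0$ for all $\gamma\in\Delta$ via the Artin--Schreier relation $\la(H_\gamma)^p-\la(H_\gamma)=\chi(H_\gamma)^p$. The paper argues exactly this way and glosses over both subtleties you flag.

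Your ``hard part'' dissolves without casework: a simple baby Verma $Z_\chi^\Pi(\la)$ is automatically of type~$M$, because its $\la$-weight space $Kv_\la$ is one-dimensional and purely even, so any odd $\rea{\chi}{\g}$-endomorphism must annihilate $v_\la$ and hence vanish, giving $\End_{\rea{\chi}{\g}}(Z_\chi^\Pi(\la))=K$. Your appeal to Theorem~\ref{th:KW} in stage~(i) is also unnecessary (the paper does not use it, though Remark~\ref{rem:semisimplicity} notes it gives an alternative route to the ``if'' direction): once each $Z_\chi^\Pi(\la)$ is simple it coincides with its own simple head, and since every simple $\rea{\chi}{\g}$-module carries a highest-weight vector it is already isomorphic to some baby Verma, so the $Z_\chi^\Pi(\la)$ exhaust the simples up to parity shift without any dimension bound.
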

\begin{proof}
The argument, which uses the irreducibility criterion in
Theorem~\ref{thm:irred-verma}, is pretty standard. We include it
here just for the sake of completeness.

Since $\chi$ satisfies $\chi(X_\alpha)=0$ for each $\alpha \in
\ev\Delta$, for any set of simple roots $\Pi$, the baby Verma
modules $Z^\Pi_\chi(\la)$ for $\la \in \La_\chi$ have unique
irreducible quotients, and they form a complete and irredundant set
of irreducible $\rea{\chi}{\g}$-modules. Now by Wedderburn Theorem
and a dimension counting argument, $\rea{\chi}{\g}$ is semisimple if
and only if all the baby Verma modules $Z^\Pi_\chi(\la)$ for $\la
\in \La_\chi$ are simple. By Theorem~\ref{thm:irred-verma},
$Z^\Pi_\chi(\la)$ being simple for all $\la \in \La_\chi$ is
equivalent to ${}^\Pi\Phi(\la)\neq 0$ for all $\la \in \La_\chi$,
which in turn is equivalent to (i) $(\la + {}^\Pi\rho)(H_\alpha)
\notin \mathbb{F}_p \setminus \{0\}$ for all $\alpha \in
\Delta_{\bar{0}}$ and (ii) $(\la+{}^\Pi\rho)(H_\beta) \neq 0$ for
all $\beta \in \Delta_{\bar{1}}$.

Recall that under current assumption, $\chi$ is regular semisimple
if and only if $\chi(H_\alpha) \neq 0$ for all $\alpha \in \Delta$.
If $\chi$ is regular semisimple, then it follows that for any $\la
\in \La_\chi$, $\la(H_\alpha) \notin \mathbb{F}_p$ for all $\alpha
\in \Delta$ since $\la(H_\alpha)^p-\la(H_\alpha)=\chi(H_\alpha)^p$.
In this situation, both (i) and (ii) are true since
${}^\Pi\rho(H_\alpha) \in \mathbb{F}_p$ for any $\alpha \in \Delta$.
Hence all $Z^\Pi_\chi(\la)$ are simple and $\rea{\chi}{\g}$ is
semisimple.

Conversely, if $\chi$ is not regular semisimple, then
$\chi(H_\alpha)=0$ for some $\alpha \in \Delta$. Let us assume
$\alpha \in \Delta_{\bar{0}}$, since the other case can be argued in
a similar fashion. Then $\la(H_\alpha) \in \mathbb{F}_p$ for $\la
\in \La_\chi$. Since shifting the value of $\la(H_\alpha)$ by a
number in $\mathbb{F}_p$ will still result in an element in
$\La_\chi$ (noting that the values of $\la(H_\beta)$ for some $\beta
\in \Delta$ will be changing correspondingly), we may thus assume
$(\la + {}^\Pi\rho)(H_\alpha) =1$. Then ${}^\Pi\Phi(\la)=0$ and
$Z^\Pi_\chi(\la)$ is reducible by Theorem~\ref{thm:irred-verma}.
Hence $\rea{\chi}{\g}$ is not semisimple.

\end{proof}

\begin{remark}\label{rem:semisimplicity}
Note that the ``if'' part of the theorem is a consequence (cf.
\cite[Corollary 5.7]{WZ1}) of the Super Kac-Weisfeiler Conjecture
(Theorem~\ref{th:KW}).

Also, for type I basic classical Lie superalgebras,
Theorems~\ref{thm:irred-verma} and \ref{thm:semisimplicity-rea} are
consequences of an equivalence of categories between typical
$\rea{\chi}{\g}$-modules and typical $\rea{\chi}{\ev\g}$-modules
(see \cite[Theorems~4.1 and 4.3]{Z}).
\end{remark}


\begin{thebibliography}{ABCD}

\bibitem[B]{B} A. Bell, {\em A criterion for primeness of enveloping algebras of Lie
superalgebras}, J. Pure Appl. Algebra {\bf 69} (1990), 111--120.

\bibitem[FP]{FP} E. Friedlander and B. Parshall, {\em Modular
representation theory of Lie algebras}, Amer. J. Math. {\bf 110}
(1988), 1055--1093.

\bibitem[Jac]{Jac} N. Jacobson, {\em Lie algebras}, Wiley Interscience,
New York 1962.

\bibitem[Jan1]{Jan1} J. Jantzen, {\em Representations of Lie algebras in prime
characteristic}, NATO Adv. Sci. Inst. Ser. C Math. Phys. Sci., {\bf
514}, Representation theories and algebraic geometry (Montreal, PQ,
1997), 185--235, Kluwer Acad. Publ., Dordrecht, 1998.

\bibitem[Jan2]{Jan2} J. Jantzen, {\em Nilpotent orbits in representation
theory}, 1--211, Progr. Math., {\bf 228}, Birkh\"{a}user, Boston,
MA, 2004.

\bibitem[Kac]{Kac} V. Kac, {\em Lie Superalgebras}, Adv. Math. {\bf 16}
(1977), 8--96.

\bibitem[LBF]{LBF} S. Liu, M. Beattie and H. Fang, {\em Graded division rings and the Jacobson density
theorem}, J. Beijing Normal Univ. (Natural science) {\bf 27} (1991),
129--134.


\bibitem[Pr1]{Pr1} A. Premet,
{\em Irreducible representations of Lie algebras of reductive groups
and the Kac-Weisfeiler conjecture}, Invent. Math. {\bf 121} (1995),
79--117.


\bibitem[Pr2]{Pr2}
A. Premet, {\em Special transverse slices and their enveloping
algebras}, Adv. Math. {\bf 170} (2002), 1--55.

\bibitem[PS]{PS} A. Premet and S. Skryabin, {\em Representations of restricted Lie algebras and families of
associative $\mathscr L$-algebras}, J. Reine Angew. Math. {\bf 507}
(1999), 189--218.

\bibitem[Rud]{Rud} A. Rudakov,
{\em The representations of classical semisimple Lie algebras in
characteristic $p$},  Izv. Akad. Nauk SSSR Ser. Mat. {\bf 34}
(1970), 735--743.

\bibitem[SNR]{SNR}
M. Scheunert, W. Nahm, and V. Rittenberg, {\em Classification of all
simple graded Lie algebras whose Lie algebra is reductive I}, J.
Math. Phys. {\bf 17} (1976), 1626--1639. {\em II Construction of the
exceptional algebras}, J. Math. Phys. {\bf 17} (1976), 1640--1644.


\bibitem[Ser]{Ser} V. Serganova,
{\em Kac-Moody superalgebras and integrability}, Preprint, available
in http://math.berkeley.edu/~serganov



\bibitem[Skr]{Skr} S. Skryabin,
{\em Representations of the Poisson algebra in prime
characteristic}, Math. Z. {\bf 243} (2003), 563--597.


\bibitem[W]{W} C.T.C. Wall,
{\em Graded Brauer groups}, J. Reine Angew. Math. {\bf 213} (1964),
187--199.

\bibitem[WZ1]{WZ1} W. Wang and L. Zhao, {\em Representations of Lie superalgebras in prime characteristic
I}, Proc. London Math. Soc. {\bf 99} (2009), 145--167.

\bibitem[WZ2]{WZ2} W. Wang and L. Zhao, {\em Representations of Lie superalgebras in prime characteristic II: the queer series}, preprint,
arXiv:0902.2758, (2009).

\bibitem[WK]{KW} B. Weisfeiler and V.~Kac,
{\em On irreducible representations of Lie $p$-algebras}, Func.
Anal. Appl. {\bf 5} (1971), 111--117.

\bibitem[Zh]{Zh} C. Zhang, {\em On simple modules for the restricted Lie superalgebra
$\gl(m|n)$}, preprint, arXiv:0905.1383.

\bibitem[Z]{Z} L. Zhao, {\em Typical blocks of Lie superalgebras in prime
characteristic}, preprint, arXiv:0905.1760, (2009).

\end{thebibliography}
\end{document}